\newtheorem{theorem}{Theorem}[section]
\newtheorem{conjecture}{Conjecture}[section]
\newtheorem{definition}[theorem]{Definition}
\newtheorem{lemma}[theorem]{Lemma}
\newtheorem{corollary}[theorem]{Corollary}
\begin{document}

\bibliographystyle{plain}

\markboth{Jes\'us Rodr\'{\i}guez-Viorato}
{Some pretzels}

\title{Alternating Montesinos knots and Conjecture $\mathbb{Z}$}
\keywords{Alternating Montesinos knots, Conjecture $\mathbb{Z}$ , Kervaire Conjecture and Pretzel knots}


\subjclass[2010]{57M07, 57M25, 57M99}

\author{Jes\'us Rodr\'{\i}guez-Viorato}

\maketitle

\begin{abstract}
Conjecture $\mathbb{Z}$ is a knot theoretical equivalent form of the Kervaire Conjecture. 
We say that a knot have property $\mathbb{Z}$ if it satisfies Conjecture $\mathbb{Z}$ for that specific knot.
In this work, we show that alternating Montesinos knots with three tangles have property $\mathbb{Z}$.
We also show that all the pretzel knots of the form $P(p,q,r)$ (not necessarily alternating) have property $\mathbb{Z}$.
\end{abstract}

\section{Introduction}
The Kervaire Conjecture is a well know combinatorial group theory conjecture. In order to translate Kervaire Conjecture into Knot theory, F. González Acuña and A. Ramírez in \cite{MR2221530} stated a knot theoretical equivalent conjecture called Conjecture $\mathbb{Z}$. 

\begin{conjecture}[Kervaire]\label{conj:kervaire}
For every non-trivial group $G$ the free product group $\mathbb{Z} * G$ can not be normally generated by one element.
\end{conjecture}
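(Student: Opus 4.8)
The plan is to follow the representation-variety method of Gerstenhaber and Rothaus, since killing the normal closure by bare-hands combinatorics appears hopeless. I should say at the outset that this statement is the celebrated Kervaire conjecture and is open in general, so what follows establishes only the cases the method actually reaches. Suppose toward a contradiction that some $w \in \mathbb{Z} * G$ normally generates, i.e. $Q := (\mathbb{Z} * G)/\langle\langle w \rangle\rangle$ is trivial. Writing $\mathbb{Z} = \langle t \rangle$ and projecting onto the abelianized $\mathbb{Z}$ factor (sending $t \mapsto 1$ and $G \mapsto 0$), the image of $w$ must generate $\mathbb{Z}$; hence the first step is to record that the total $t$-exponent sum $m$ of $w$ equals $\pm 1$. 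This normalization is exactly what forces a nonzero degree below.

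The main step is to manufacture a nontrivial representation of $Q$. Assume first that $G$ admits a faithful finite-dimensional unitary representation $\rho : G \hookrightarrow U(n)$ (for instance $G$ finite, via its regular representation, or $G$ linear). Consider the word map $F : U(n) \to U(n)$, $F(X) = w(X, \rho(g_1), \dots, \rho(g_k))$, where $g_1, \dots, g_k$ are the $G$-letters appearing in $w$. Since $U(n)$ is connected, each fixed matrix $\rho(g_i)$ can be joined to $I$ by a path, and the resulting homotopy deforms $F$ to the power map $X \mapsto X^{m}$; as $m = \pm 1$ this is the identity or inversion, so $\deg F = \pm 1 \neq 0$. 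A map of nonzero degree between closed oriented manifolds of equal dimension is surjective, so some $X_0 \in U(n)$ satisfies $F(X_0) = I$. Then $t \mapsto X_0$, $g \mapsto \rho(g)$ defines a homomorphism $\mathbb{Z} * G \to U(n)$ that annihilates $w$, hence factors through $Q$; it restricts to $\rho$ on $G$ and so is nontrivial, contradicting $Q = 1$.

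To reach every group with a nontrivial finite quotient one composes: given a surjection $G \to \bar G$ onto a nontrivial finite group, the induced surjection $\mathbb{Z} * G \to \mathbb{Z} * \bar G$ carries a normal generator to a normal generator, and $\bar G$ is handled by the previous paragraph. The hard part — and the precise reason this remains a \emph{conjecture} rather than a theorem — is the existence of nontrivial groups $G$ possessing no nontrivial finite-dimensional (indeed no nontrivial finite) quotients or representations; for such $G$ the word map lives in no linear group and the degree argument has nothing to detect. Closing that gap would demand either a genuinely new supply of representations of $Q$ or a combinatorial/geometric replacement for the topological degree, and it is there that I expect the essential obstruction to sit. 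It is also why the body of the paper does not assault the conjecture directly but instead verifies the equivalent property $\mathbb{Z}$ for explicit families of alternating Montesinos and pretzel knot groups, where the extra geometric structure can be exploited.
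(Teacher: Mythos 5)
There is nothing in the paper for your proposal to be compared against: the statement you were given is the Kervaire Conjecture itself, which the paper records as an open problem (it is the motivation for the work, being equivalent to Conjecture~$\mathbb{Z}$), and the paper never attempts a proof of it. What the paper actually proves is Property~$\mathbb{Z}$ for alternating Montesinos knots with three tangles and for pretzel knots $P(p,q,r)$, i.e.\ instances of the equivalent knot-theoretic conjecture, via Hatcher--Oertel edgepath systems and oriented train tracks --- a route entirely disjoint from yours. So the honest verdict is: your proposal is not a proof of the statement, and no proof exists; you say as much yourself, which is the correct assessment.

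Within the cases you do claim, the Gerstenhaber--Rothaus argument is rendered correctly: triviality of $Q=(\mathbb{Z}*G)/\langle\langle w\rangle\rangle$ forces $t$-exponent sum $m=\pm 1$ by abelianizing and projecting; the word map $F(X)=w(X,\rho(g_1),\dots,\rho(g_k))$ on $U(n)$ is homotopic to $X\mapsto X^{m}$ by contracting the constants to $I$ in the connected group $U(n)$, hence has degree $\pm 1$ and is surjective; a preimage of $I$ yields a representation of $Q$ restricting to the faithful $\rho$ on $G$, a contradiction. The composition through a nontrivial finite quotient $G\to\bar G$ is also correct, since normal generators push forward under surjections. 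One genuine inaccuracy to flag: your parenthetical ``or $G$ linear'' as a source of faithful finite-dimensional \emph{unitary} representations is false in general --- higher-rank lattices such as $SL_n(\mathbb{Z})$, $n\ge 3$, are linear, but property (T) forces all their finite-dimensional unitary representations to have finite image, so no faithful one exists. Linear groups are instead reached through Mal'cev residual finiteness (for finitely generated ones) and your finite-quotient composition, not through the unitary embedding directly. With that repair, your text is an accurate account of the known partial results and of exactly where the obstruction lies (groups with no nontrivial finite quotients), but it should be presented as such --- a survey of the state of the art --- rather than as a proof of the conjecture.
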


\begin{conjecture}[Conjecture $\mathbb{Z}$]\label{conj:z}
If $F$ is a compact orientable and non-separating surface properly embedded in a knot
exterior $E$, then $\pi_1(E/F) \cong \mathbb{Z}$.
\end{conjecture}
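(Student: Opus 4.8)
The plan is to reduce this topological statement to a purely group-theoretic one and then recognise the residual claim as the Kervaire Conjecture itself. First I would set up the algebra of collapsing $F$; assume $F$ is connected, the disconnected case being analogous. Since $F$ is non-separating, cutting $E$ along $F$ yields a connected manifold $E'$ whose boundary contains two copies $F_+, F_-$ of $F$, and $E$ is recovered by regluing $F_+$ to $F_-$. By van Kampen this presents $G := \pi_1(E)$ as an HNN extension
\[
G = \langle\, \pi_1(E'),\, t \mid t\, a\, t^{-1} = \phi(a),\ a \in A\,\rangle,
\]
where $A = \mathrm{im}\bigl(\pi_1(F_-)\to\pi_1(E')\bigr)$, the map $\phi$ is the gluing isomorphism onto $B = \mathrm{im}\bigl(\pi_1(F_+)\to\pi_1(E')\bigr)$, and the stable letter $t$ is the class carried once through the surface.

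Next I would compute $\pi_1(E/F)$. For the good pair $(E,F)$ the quotient $E/F$ is homotopy equivalent to the mapping cone of the inclusion $F \hookrightarrow E$, so van Kampen gives $\pi_1(E/F) \cong G/N$, where $N$ is the normal closure of the image of $\pi_1(F)$, that is, the normal closure of $A$ (equivalently of $A\cup B$). Imposing $A = 1$ in the HNN presentation forces $B = \phi(A) = 1$ and trivialises every stable relation, leaving a free product
\[
\pi_1(E/F) \cong G_0 * \mathbb{Z}, \qquad G_0 := \pi_1(E')\big/\langle\!\langle A\cup B\rangle\!\rangle,
\]
with the infinite cyclic factor generated by the image of $t$. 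Thus Conjecture $\mathbb{Z}$ for the pair $(E,F)$ is equivalent to the single assertion that $G_0$ is trivial.

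To finish I would exploit that a knot group is normally generated by its meridian $\mu$, so the image of $\mu$ normally generates the quotient $\pi_1(E/F) \cong \mathbb{Z} * G_0$. If $G_0$ were a non-trivial group, then $\mathbb{Z} * G_0$ would be normally generated by one element, contradicting the Kervaire Conjecture (Conjecture~\ref{conj:kervaire}); hence $G_0 = 1$ and $\pi_1(E/F) \cong \mathbb{Z}$. The argument also runs in reverse: any normal generator of a free product $\mathbb{Z} * G_0$ can be realised as the meridian of a knot together with a dual non-separating surface, so the two conjectures are genuinely equivalent, not merely linked in one direction.

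The main obstacle is exactly this last equivalence. The reduction above is clean and completely general, valid for every knot and every compact orientable non-separating $F$, but the surviving group-theoretic claim ``$G_0 = 1$'' \emph{is} Kervaire's conjecture verbatim, which remains open. Consequently no unconditional proof of the statement as worded can bypass Kervaire; honest progress must either assume Kervaire or, for restricted families, replace the abstract appeal by a direct verification that the specific quotient $G_0 = \pi_1(E')/\langle\!\langle A\cup B\rangle\!\rangle$ vanishes. Gaining enough control over the subgroups $A$ and $B$ and over $\pi_1(E')$ to decide triviality of $G_0$ from a concrete decomposition of $E$ is precisely where the real difficulty resides.
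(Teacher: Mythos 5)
You have not proved the statement, and you could not have: in the paper this is labelled a \emph{conjecture}, with no proof offered or possible at present. It is the knot-theoretic reformulation, due to González Acuña and Ramírez \cite{MR2221530}, of the Kervaire Conjecture, which is open. What the paper actually does is verify \emph{property} $\mathbb{Z}$ unconditionally for restricted families (alternating Montesinos knots with three tangles, pretzels $P(p,q,r)$), by running the Hatcher--Oertel classification of incompressible surfaces and checking, surface by surface via the oriented train-track machinery, that the relevant ICON candidate surfaces have connected boundary --- the case already settled in \cite{MR2221530}. Your write-up concedes the essential point honestly: after the HNN reduction, the residual claim $G_0 = 1$ is Kervaire verbatim, so your argument is a conditional reduction (Kervaire $\Rightarrow$ Conjecture $\mathbb{Z}$), not a proof of the statement.

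As a reduction, the core computation is sound for connected $F$: cutting along $F$ presents $\pi_1(E)$ as an HNN extension, killing $\langle\!\langle A \rangle\!\rangle$ trivialises the edge relations and leaves $G_0 * \mathbb{Z}$ with the stable letter surviving, and the meridian normally generates the quotient, so Kervaire forces $G_0 = 1$. Two gaps remain even in this modest framing. First, the disconnected case is not ``analogous'': collapsing a disconnected $F$ to a single point introduces extra free factors (wedged cone points) and the cut manifold may need several stable letters; note also that the genuinely open territory, as the paper stresses, is disconnected $\partial F$ --- connected \emph{boundary} (not connected $F$) is what \cite{MR2221530} settled --- and your argument never sees the boundary structure at all. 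Second, your one-sentence claim that any normally-one-generated free product $\mathbb{Z} * G_0$ ``can be realised as the meridian of a knot together with a dual non-separating surface'' is precisely the hard direction of the equivalence proved in \cite{MR2221530}; asserting it without the realization construction is circular as a justification that the two conjectures are equivalent. In short: your reduction correctly reproduces the known equivalence in one direction, but the statement itself remains open, and unconditional progress of the kind this paper makes requires the concrete surface-classification work, not the abstract quotient $G_0 = \pi_1(E')/\langle\!\langle A \cup B \rangle\!\rangle$.
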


In this conjecture, by exterior of a knot, we mean the complement on $S^3$ of an open regular neighborhood of a knot. This implies that $\partial E$ is a torus, and that $F$ can have many boundary components. In fact, when the numbers of components is exactly one,  in \cite{MR2221530} F. González Acuña and A. Ramírez proved that $\pi_1(E/F) \cong \mathbb{Z}$. So, the interesting case would be when $\partial F$ is disconnected. 

We will say that a surface $F$ on the knot exterior $E$ has Property $\mathbb{Z}$ if $\pi_1(E/F) \cong \mathbb{Z}$. Similarly we say that a knot $k$ has property $\mathbb{Z}$ if any $F$ compact orientable and non-separating surface properly embedded in the exterior $E$ of $k$ has property $\mathbb{Z}$. Recently in \cite{JFPretzel} we proved that, by showing property $\mathbb{Z}$ for ICON (\emph{incompressible} compact orientable and non-separating) surfaces, it is enough to show that a given knot $k$ has property $\mathbb{Z}$. Eudave Mu\~noz showed in \cite{eudave-munoz2013} some knots with ICON surfaces with disconnected boundary he proved; that those surfaces have property $\mathbb{Z}$ but it is unknown if the knots have it.

In \cite{MR2221530} it was proved that fibred knots and rank two knots (knots with fundamental group of rank two) have property $\mathbb{Z}$. In \cite{JFPretzel} we were able to prove that many pretzel knots of three braids have Property $\mathbb{Z}$. In the present paper we complete the proof for all pretzel knots with three braids and also that all alternating Montesinos knots with three tangles have property $\mathbb{Z}$ (see Theorem \ref{thm:prop-z-for-alt-mont}).

This result expands the family of known knots that have Property $\mathbb{Z}$. If we were able to expand this family long enough, so that the family is dominant (see Def.  \ref{def:dominant}) the conjecture $\mathbb{Z}$ would be true.

\begin{definition}\label{def:dominant}
 A family of knots $\mathcal{F}$ is \emph{dominant} if for ever knot $k$ in $S^3$ there is a knot $k' \in \mathcal{F}$ such that there is an epimorphism from the fundamental group of $k'$ onto that of $k$.
\end{definition}

Our strategy for tackling Conjecture $\mathbb{Z}$ is by proving that Montesinos knots are a family of dominant knots having Property $\mathbb{Z}$. The present work is a step in that direction.

The main technique is by the classification of incompressible surfaces on Montesinos knots developed by A. Hatcher and U. Oertel in \cite{MR1030987}. We also use the notion of oriented train tracks developed in \cite{JFPretzel}.

\section{Preliminary}

\subsection{About oriented weights}
As mentioned before, in \cite{JFPretzel} we introduce the concept of oriented train tracks, that basically are the same as regular train tracks but instead of using positive integers we use elements of $\mathbb{P}_2$ (the free semigroup of rank two) as weights. In this section, we state some basic properties and lemmas about the weights of oriented train tracks.

From the definition of oriented train tracks (see \cite{JFPretzel}), three functions take an important role: $J, \rho, -: \mathbb{P}_2 \to \mathbb{P}_2$,  defined as:

\begin{eqnarray}
\rho(x_1, x_2, \ldots, x_n) &=& (x_2,x_3,  \ldots, x_n,x_1) \\
J(x_1, x_2, \ldots, x_n) &=& (x_n,x_{n-1},  \ldots, x_{2},x_1)\\
-(x_1, x_2, \ldots, x_n) &=& (-x_1, -x_2, \ldots, -x_n)
\end{eqnarray}

We also denote by $\oplus$ the operation on $\mathbb{P}_2$; this is, $\oplus$ denotes the usual concatenation of words. And we denote by $\rho^n$ as the $n$ times application of $\rho$ when $n >0$ and the $|n|$ times application of $\rho^{-1}$ (the inverse function of $\rho$) when $n$ is negative; $\rho^0$ is the identity function. We use $|x|$ to denote the length of the word $x$. The following properties can be easily verified. 

\begin{lemma}\label{lemma:prop-rho-and-J}Let  $x, y \in \mathbb{P}_2$ and
$\rho$ and $J$ be the functions defined above. Then:
\begin{enumerate}
 \item $\rho^n(\rho^m(x)) =  \rho^{n+m}(x)$ for all $n,m \in \mathbb{Z}$
 \item $\rho(-x) = -\rho(x)$ and $J(-x) = -J(x)$
 \item If $n = |x|$ then $\rho^n(x) = x$.
 \item $J \circ J (x) = x$
 \item If $n = |x|$ then $\rho^n(x \oplus y) = y \oplus x$
 \item $\rho \circ J (x) = J \circ \rho^{-1} (x) $
\end{enumerate}

\end{lemma}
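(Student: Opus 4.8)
The plan is to treat every element of $\mathbb{P}_2$ as a word and reduce each of the six identities to elementary index bookkeeping. The single organizing device is to record, for a word $x = (x_1, \ldots, x_\ell)$ of length $\ell = |x|$, how each operation acts on indices: $\rho$ shifts the index by one cyclically, $J$ reflects it, and $-$ leaves indices alone while negating entries. Concretely I would establish the index formulas $\rho(x)_i = x_{i+1}$ (indices read modulo $\ell$, with representatives in $\{1, \ldots, \ell\}$) and $J(x)_i = x_{\ell+1-i}$, and note that $\rho$ restricted to words of a fixed length is a bijection, since it cyclically permutes the $\ell$ positions; hence $\rho^{-1}$, and $\rho^n$ for negative $n$, are well defined.

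With these formulas in hand, properties (1)--(5) follow by direct computation. For (1) I would first prove $\rho^n(x)_i = x_{i+n}$ for $n \geq 0$ by induction on $n$ using the one-step formula, extend it to negative $n$ via the bijectivity of $\rho$, and then read off $\rho^n(\rho^m(x))_i = \rho^m(x)_{i+n} = x_{i+n+m} = \rho^{n+m}(x)_i$. Property (3) is the special case $n = \ell$, where $x_{i+\ell} = x_i$ modulo $\ell$, and property (5) is the analogous computation on the concatenated word $x \oplus y$ of length $|x| + |y|$, shifting by $|x|$ positions. Property (2) is immediate from the definitions, since $\rho$ and $J$ only rearrange entries while $-$ acts entrywise: $\rho(-x) = (-x_2, \ldots, -x_n, -x_1) = -\rho(x)$, and likewise for $J$. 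Property (4) follows from $J(J(x))_i = J(x)_{\ell+1-i} = x_{\ell+1-(\ell+1-i)} = x_i$.

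The only identity requiring a little care is (6), the commutation relation $\rho \circ J = J \circ \rho^{-1}$, since it mixes the reflection and the shift and is prone to off-by-one errors in the modular arithmetic. I would verify it by computing both sides explicitly on $x = (x_1, \ldots, x_n)$: the left side gives $\rho(J(x)) = (x_{n-1}, x_{n-2}, \ldots, x_1, x_n)$, while on the right $\rho^{-1}(x) = (x_n, x_1, \ldots, x_{n-1})$ and applying $J$ yields the same word $(x_{n-1}, x_{n-2}, \ldots, x_1, x_n)$. Thus all six properties hold. None of the steps presents a genuine obstacle — the content is entirely the careful tracking of indices modulo $\ell$, with (1), because of the extension to negative exponents, and (6), because of the combined reflection-and-shift indexing, being the only places where attention is needed.
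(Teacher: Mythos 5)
Your proof is correct: the index formulas $\rho(x)_i = x_{i+1}$ and $J(x)_i = x_{\ell+1-i}$ are right, the inductive extension of (1) to negative exponents via bijectivity is sound, and your explicit check of (6) — both sides equal $(x_{n-1}, x_{n-2}, \ldots, x_1, x_n)$ — avoids the off-by-one trap. The paper offers no proof at all (it merely asserts the properties ``can be easily verified''), and your direct verification is precisely the routine computation the authors had in mind.
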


A new function that we introduce in this paper is $\Sigma(x) = x_1 + x_2 +\cdots + x_n $, where $x = (x_1, x_2, \dots, x_n)$ is an element of  $\mathbb{P}_2$; we think of each $x_i$ as an element of $\{+1,-1\}$. An important property of this function is described in the following lemma.

\begin{lemma}\label{lemma:rho-a}
Let $a \in \mathbb{P}_2$ such that $\Sigma(a) = \pm 1$, then $\rho^t(a) = a$ if and only if $t$ is a multiple of $|a|$.
\end{lemma}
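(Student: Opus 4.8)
The plan is to prove the two implications separately, with the forward (``if'') direction being immediate and the reverse (``only if'') direction being the one that genuinely uses the hypothesis $\Sigma(a) = \pm 1$. Throughout I write $n = |a|$.

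First I would dispose of the ``if'' direction. By part (3) of Lemma \ref{lemma:prop-rho-and-J} we have $\rho^n(a) = a$, and by part (1) the exponents of $\rho$ add, so $\rho^{kn}(a) = (\rho^n)^k(a) = a$ for every integer $k$; hence every multiple of $n$ fixes $a$.

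For the ``only if'' direction, the key structural observation is that $H = \{\, t \in \mathbb{Z} : \rho^t(a) = a \,\}$ is a subgroup of $\mathbb{Z}$, since $\rho$ is a bijection and $\rho^{s+t} = \rho^s \circ \rho^t$ by part (1), so $H$ is closed under addition and negation. The previous paragraph gives $n \in H$, so $H$ is nontrivial and therefore equals $d\mathbb{Z}$ for its least positive element $d$, with $d \mid n$. Thus $\rho^t(a) = a$ holds precisely when $d \mid t$, and the whole statement reduces to proving $d = n$.

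The heart of the argument, and the only place where $\Sigma(a) = \pm 1$ enters, is ruling out $d < n$. Assuming $d \mid n$ with $d < n$, the relation $\rho^d(a) = a$ forces $a_i = a_{i+d}$ for all indices read modulo $n$; since $d \mid n$ this means $a$ is the concatenation of exactly $n/d$ copies of the length-$d$ block $b = (a_1, \dots, a_d)$, so that $\Sigma(a) = (n/d)\,\Sigma(b)$. Here $n/d \geq 2$, hence $|\Sigma(a)|$ is either $0$ (when $\Sigma(b) = 0$) or at least $n/d \geq 2$; in neither case can it equal $1$, contradicting the hypothesis $\Sigma(a) = \pm 1$. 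Therefore $d = n$, and $\rho^t(a) = a$ if and only if $n \mid t$. I expect the block-decomposition step — converting $\rho^d(a) = a$ with $d \mid n$ into the statement that $a$ is $n/d$ repetitions of a single block — to be the part deserving the most care, since it is where the cyclic structure of $\rho$ must be matched against divisibility; once it is established, the contradiction via $\Sigma$ is purely arithmetic.
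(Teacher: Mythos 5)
Your proposal is correct and follows essentially the same route as the paper: both reduce to a divisor of $|a|$ (you via the stabilizer subgroup $d\mathbb{Z}$, the paper via $g=\gcd(t,|a|)$, which is the same Bezout fact), and both conclude with the identical block-decomposition argument showing $\Sigma(a)=(|a|/d)\,\Sigma(b)$, which is incompatible with $\Sigma(a)=\pm 1$ unless $d=|a|$.
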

\begin{proof}
Let $m = |a|$, we are going to prove that $m | t$. Observe that since $\rho^m(a) = a$ then $\rho^g(a) = a$ for $g =gcd(t,m)$.

Since $g |  m$ it follows that $a = b \oplus b \oplus \dots \oplus b$, $m/g$ times and $|b| = g$. Then, $ \pm 1 = \Sigma(a) = (m/g) \cdot  \Sigma(b) $ therefore $m/g | \pm 1$, so $m=g$.

But, by definition of $g$, we get that $g|t$, so $m|t$. 

The converse is obviously true by Lemma \ref{lemma:prop-rho-and-J}.
\end{proof}

The weights of the train tracks we are considering here satisfy that $\Sigma(a)= \pm 1$; this is mainly because $\partial F$ is $1$ in $H_1(\partial E(k))$.  This is why the previous lemma and  following corollary are going to be frequently used on the study of surfaces throughout this paper.  

\begin{corollary}\label{cor:rho-a=a}
 Let $a \in \mathbb{P}_2$ such that $\Sigma(a) = \pm 1$ and $m=|a|$ is odd. Then $\rho^{2(x-y)}(a) =a $ with $|x|,|y|<m/2$  if and only if $x=y$.
\end{corollary}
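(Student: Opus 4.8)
The plan is to deduce this directly from Lemma \ref{lemma:rho-a}, reducing the statement to an elementary divisibility argument. First I would apply Lemma \ref{lemma:rho-a} with $t = 2(x-y)$: since $\Sigma(a) = \pm 1$, the equality $\rho^{2(x-y)}(a) = a$ holds if and only if $m$ divides $2(x-y)$. This converts the assertion about the weight $a$ into a purely numerical condition on the integer $x-y$.

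Next I would bring in the hypothesis that $m$ is odd. Because $\gcd(m,2) = 1$, divisibility of $2(x-y)$ by $m$ is equivalent to divisibility of $(x-y)$ by $m$, so the condition becomes simply $m \mid (x-y)$. This is precisely the step in which the oddness of $m$ is used, and it is the one point of the argument that genuinely requires the hypothesis.

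Finally I would exploit the bounds $|x|, |y| < m/2$. By the triangle inequality, $|x - y| \le |x| + |y| < m$, so $x - y$ is an integer multiple of $m$ lying strictly inside the open interval $(-m, m)$; the only such multiple is $0$, which forces $x = y$. The converse is immediate, since $x = y$ gives exponent $0$ and $\rho^{0}(a) = a$ by Lemma \ref{lemma:prop-rho-and-J}.

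The argument is short and I do not anticipate a real obstacle: the whole content is packaging Lemma \ref{lemma:rho-a} together with the reduction $m \mid 2(x-y) \iff m \mid (x-y)$. The only subtlety worth flagging is that this reduction fails when $m$ is even, which explains why the corollary isolates the odd case and why the factor of $2$ in the exponent does no harm there.
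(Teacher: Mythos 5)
Your proposal is correct and follows essentially the same route as the paper's own proof: invoke Lemma \ref{lemma:rho-a} to get $m \mid 2(x-y)$, use oddness of $m$ to conclude $m \mid (x-y)$, and then the bound $|x-y| \leq |x|+|y| < m$ forces $x=y$, with the converse immediate. No gaps; your remark about why oddness is essential is a nice bonus but the argument is the same.
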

\begin{proof}
 By Lemma \ref{lemma:rho-a} we get that $m| 2(x-y)$, since $m$ is odd, we also got that $m|x-y$. But $|x-y| \leq |x| + |y| < m/2 + m/2 = m$, so $x - y = 0$, i.e $x = y$.
\end{proof}

\subsection{Incompressible surfaces of Montesinos knots}
In this subsection, we review the algorithm developed by A. Hatcher and U. Oertel in \cite{MR1030987}. The algorithm allows us to construct all the incompressible surfaces on a Montesinos knot. 

First, recall the definition of diagram $\mathscr{D}$.  The diagram $\mathscr{D}$ is a subset of $\mathbb{R}^2$ with the following simplicial structure (see Fig. \ref{fig:diagrama_D}):
\begin{itemize}
 \item Its vertexes are the points of coordinates $( \frac{q-1}{q}, \frac{p}{q})$ denoted by $\langle p/q \rangle$, 
 \item Also $(1, \frac{p}{q})=\langle p/q \rangle_0$  and $(-1,0)=\langle \infty \rangle = \langle 1/0 \rangle$ are vertexes of $\mathscr{D}$
 \item The edges are segments with ending points at $\langle p/q \rangle$ and $\langle r/s \rangle$ whenever $ps-qr = \pm 1$. We denote these edges by $\langle p/q, r/s\rangle$.  
 \item The segments $\langle \infty , n \rangle$ where $n \in \mathbb{Z}$ are also edges.
 \item The horizontal segments with ending points $\langle p/q \rangle$ and $\langle p/q \rangle_0$ are included in the simplicial structure of $\mathscr{D}$.
 \item Hatcher and Oertel also include 2-simplexes with vertexes on $\langle p/q \rangle$, $\langle r/s \rangle$ and $\langle u/v \rangle$ whenever each pair of vertexes is connected by an edge. But we are not going to make use of it.
\end{itemize}

In order to compute incompressible surfaces in the exterior of a Montesinos knot $M(p_1/q_1, p_2/q_2, \dots. p_n/q_n)$ we need to compute $n$ \emph{edgepaths} $\gamma_{1}$, $\gamma_2$, \dots $\gamma_n$ on diagram $\mathscr{D}$ (see Fig. \ref{fig:diagrama_D}) satisfying the following conditions:
\begin{figure}[th]
 \centering
 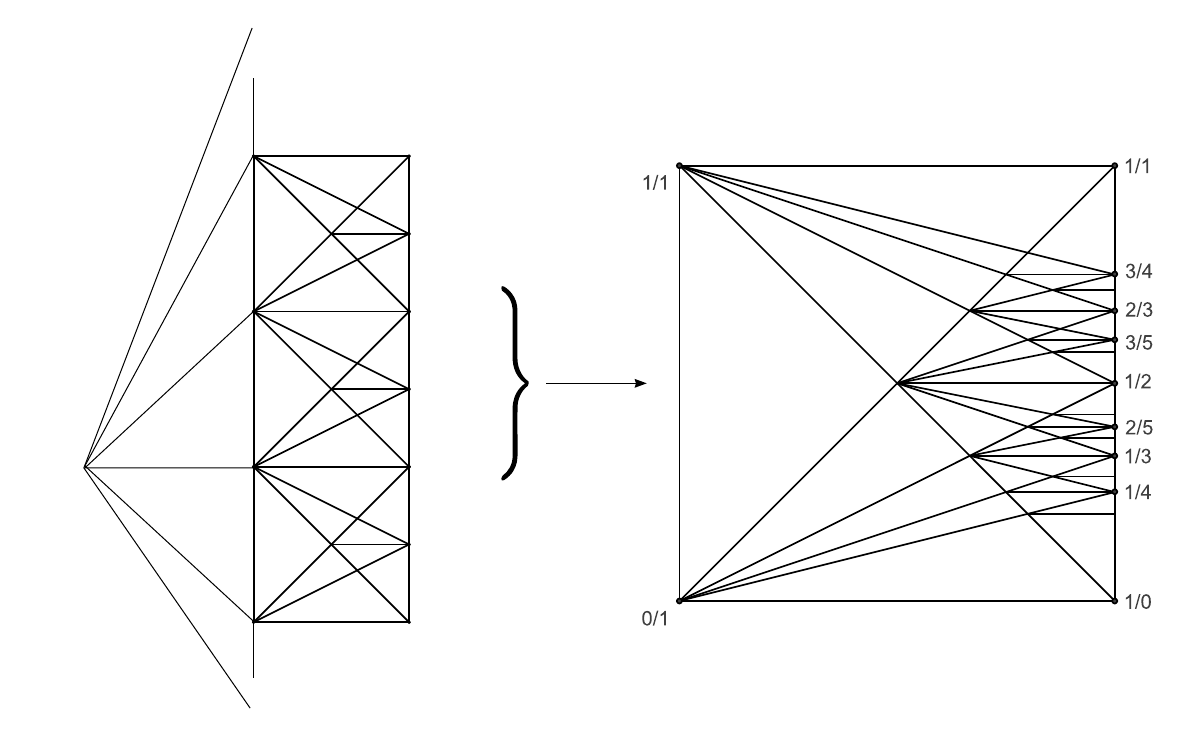
 \caption{Diagram $\mathscr{D}$}
 \label{fig:diagrama_D}
\end{figure}

\begin{itemize}\label{def:condiciones-de-sistema-de-caminos}
\item[(E1)] The starting point of $\gamma_i$ lies on the edge $\langle
p_i/q_i ,  p_i/q_i \rangle$, and if the starting point is not the vertex
$\langle p_i/q_i \rangle$, then the edgepath $\gamma_i$ is constant.

 \item[(E2)] $\gamma_i$ is \emph{minimal}, i.e., it never stops and retraces
itself, nor does it ever go along two sides of the same triangle of
$\mathscr{D}$ in succession.
 
 \item[(E3)] The ending points of the $\gamma_i$'s are rational points of
$\mathscr{D}$ which all lie on one vertical line and whose vertical coordinates
add up to zero. 

 \item[(E4)] $\gamma_i$ proceeds monotonically from right to left,
``monotonically'' in the weak sense that motion along vertical edges is
permitted.
\end{itemize}

Once we have an edgepath system satisfying conditions E1-E4, we can construct surfaces in the exterior of $M(p_1/q_1, p_2/q_2, \dots. p_n/q_n)$ as described in \cite{MR1030987}. Those surfaces are called \emph{candidate surfaces} associated to the edgepath system $\gamma_i$ ($i=1, \dots, n$). We now briefly explain the constructions of candidates surfaces:

\begin{itemize} 
  \item We first start by dividing $S^3$ in $n$ balls $B_1, \dots B_n$, each one containing a rational tangle $p_i/q_i$. 
  
  \item Consider a collar neighborhood of $\partial B_i$ in the interior of $B_i$,  which has product structure ($\partial B_i \times [0,1]$ being $\partial B_i \times \{1\}$ the boundary of $B_i$). Now, the Montesinos knot can be isotoped in such a way that inside that collar neighborhood there is nothing but four transverse arcs ($\{ \text{four points}\}\times [0,1]$) and the rest of the tangle lies over $\partial B_i \times {0}$.
  \item Now, we are going to construct a surface $F_i$ inside $\partial B_i \times [0.1] \subset B_i$ using a Morse theoretical sense, where any level $\partial B_i \times \{t\}$ is an sphere with four holes. The edgepath $\gamma_i$ is going to be used as the list of saddle changes, giving us how the intersection of $F_i \cup (\partial B_i \times \{t\})$ would look like.
  \item First take an integer $m$ such that the product $m|\gamma_i|$ is integer. Here, $|\gamma_i|$ denotes the number of edges traveled by $\gamma_i$, where fractional values are possible as explained in \cite{MR1030987}. The number $m$ is called the \emph{number of sheets}.
  \item Start by adding a set of $m$ parallel surfaces inside of $[\epsilon, 0]$  such that, the intersection of these surfaces with any level $t$ in $[\epsilon, 0]$ is a set of $m$ pairs of parallel arcs of slope $p_i/q_i$.
  \item From now on, we travel the edges of $\gamma_i$ from right to left, and for each edge $\langle a/b, c/d\rangle$ of $\gamma_i$ ($a/b$ goes first on $\gamma_i$) we add $m$ saddles, each saddle takes a pair of arcs of slope $a/b$ and transform them into a pair of arcs of slope $c/d$ (there are two possible choices for each saddle), after traveling the edge $\langle a/b, c/d\rangle$ we end with a surface that at his upper level is a set of $m$ parallel pairs arcs of slope $c/d$.
  \item The last edge of $\gamma_i$ can be a fraction of an edge, so it starts at vertex $\langle a/b \rangle$ and ends at a point of the form $(m-\alpha)/m\langle a/b \rangle + \alpha/m\langle c/d \rangle$, meaning that we going to use $\alpha$ saddles instead of $m$.
  \item This gives us a set of surfaces $F_i$ on each ball $B_i$, we can glue them together to form a \emph{candidate surface} for $k$. This gluing is possible thanks to condition (E3) for the edgepath system.
  \item For simplicity we omit many details of this construction, in fact we omit the construction of the constant edgepath and the construction of extended candidate surface. We are not going to make much use of these cases. But for a complete detailed construction we refer to \cite{MR1030987}.
\end{itemize}

It is possible to compute the boundary slope of a candidate surface through their edgepath system. For that, we define the twisting of $\gamma_i$ as $$\tau(\gamma_i) = 2(e_--e_+)$$ where $e_+$($e_-$) is the number of edges on $\gamma_i$ that increase (decrease) the slope,
and fractional values of $e_\pm$ corresponding to the final edge are allowed.
The part of $\gamma_i$ on an edge with a vertex on $\langle \infty \rangle$ do not contribute to the value of $\tau(\gamma_i)$.

Now, if $F$ is a candidate surface associated to the edgepath system $\gamma_i$ satisfying conditions E1-E4, we define the twist of $F$ as $$\tau(F) = \tau(\gamma_1) + \tau(\gamma_2)+ \cdots + \tau(\gamma_n).$$ Now, the boundary slope of $F$ can be easily computed as 

\begin{equation}\label{eqn:slope}
m(F) = \tau(F) - \tau(F_0) 
\end{equation}

where $F_0$ is a Seifert surface for $M(p_1/q_1, p_2/q_2, \dots. p_n/q_n)$. The Seifert surface can also be described as an edgepath system, an explanation of this process can be found in \cite{MR1030987}. 

One thing to notice is that if $p_i =1$ and $q_i > 1$ the edgepath $\gamma_i$ has two possible ways to travel from right to left until axis $x=0$; one is non-decreasing $\gamma^+_i$ and the other one is non-increasing $\gamma_i^-$. We can write the precise formulas as:

\begin{eqnarray}
\gamma^+_i(x) & = & \left\{ \begin{array}{ll}
                     1 -x &\ \text{for}\ x\in[0,1-1/q_i]\\
                     1/q_i &\ \text{for}\ x > 1-1/q_i
                    \end{array}
                    \right.\nonumber\\
\gamma^-_i(x) & = & \left\{ \begin{array}{ll}
                             x/(q_i-1)&\ \text{for}\ x\in[0,1-1/i]\\
                             1/q_i&\ \text{for}\ x > 1-1/i
                            \end{array}
                            \right. \label{eqn:gama_i}
\end{eqnarray}

When $p_i=-1$ there are also two formulas, they are exactly as above but multiplied by $-1$:
\begin{eqnarray}
\gamma^+_{-1/q_i}(x) & = & -\gamma_{1/q_i}^-(x)\nonumber \\
\gamma^-_{-1/q_i}(x) & = &  -\gamma_{1/q_i}^+(x) \label{eqn:gama_-i}
\end{eqnarray}

Now, when the endings of the edgepath system have positive $x$-coordinate, we say that the associated candidate surfaces are of type I. So, to compute the type I candidate surfaces for a given pretzel  $P(p,q, r) = M(1/p,1/q,1/r)$ we must solve eight ``linear'' equations resulting from the following identity: 
\begin{equation}\label{eqn:condit-E3-eqn}
\gamma_p^{\pm}(x)+\gamma_q^{\pm}(x)+\gamma_r^{\pm}(x) = 0 
\end{equation}

Once we find a solution for the equation \eqref{eqn:condit-E3-eqn}, we can compute the twist. When the tangle has the form $1/q$, The following formula gives us the twist in terms of $x$ and $q$:
 
 \begin{equation}\label{eqn:tao_gamma_i}
 \begin{array}{r@{}l}
  \tau(\gamma^+) &{}= \frac{2}{1-x} -2q \\
  \tau(\gamma^-) &{}= 2 - \frac{2x}{(1-x)(q-1)}
 \end{array}
 \end{equation}where $x$ satisfies that $ 0 \leq x \leq 1-1/q$. If we require to compute this formula for negative tangles $-1/q$, we just multiply by $-1$:

 \begin{eqnarray}
\tau(\gamma^+_{-1/q_i}) & = & -\tau(\gamma_{1/q_i}^-)\nonumber \\
\tau(\gamma^-_{-1/q_i}) & = &  -\tau(\gamma_{1/q_i}^+) \label{eqn:tao_gama_-i}
\end{eqnarray}
 
\subsection{The minimum number of sheets}

To construct a candidate surface given an edgepath system $\gamma_i$, the first step is to choose a number $m$ called \emph{number of sheets} (see \cite{MR1030987}). This number can be chosen from any number satisfying that $m |\gamma_i|$ is an integer for all $i$. From those possible values of $m$, the smallest divides all other possible values; we called it \emph{minimum number of sheets}. The algorithm programed by Nathan M. Dunfield in \cite{NathanM2001309} to compute the boundary slopes of a Montesinos knots, only require to use the minimum number of sheets. But for our problem, we have to consider all possible values for the number of sheets. 

\begin{definition}
 Given a system $\gamma_i$ of edgepaths, we define the minimum sheets of $\gamma_i$ as the smallest integer $m$ such that $m|\gamma_i|$ is integer for every $i$.
\end{definition}

In the next lemma, we will use the concept of monochromatic edgepath. This concept require coloring the edges of $\mathcal{D}$, this is done as follows. There are only three types of vertexes $\langle 0/1 \rangle$, $\langle 1/0 \rangle$ and $\langle 1/1 \rangle$ modulo two reduction. Observe that all non-horizontal edges have two different types of vertexes. So we can say that the color of an edge is determined by the type of its ends, meaning that there are three colors for the edges, $\langle 0/1 , 1/0 \rangle$, $\langle 1/0 , 1/1 \rangle$ and $\langle 1/1 ,  0/1 \rangle$. 

\begin{definition}
 If an edgepath $\gamma$ does not go along two edges of different color we say that it is \emph{monochromatic}.
\end{definition}

Observe that, by definition, all edgepaths with length less than or equal to one are monochromatic (including constant edgepaths).

\begin{lemma}\label{thm:monochromatic-system-use-minimum-sheets}
 Let $\gamma_i$ be a system of monochromatic edgepaths satisfying conditions E1-E4. If one of the associated candidate surfaces is orientable and connected, then it has the minimum or twice the minimum number of sheets.
\end{lemma}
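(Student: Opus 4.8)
The plan is to encode the candidate surface by the oriented train track weights of \cite{JFPretzel} and to reduce the two hypotheses, \emph{orientable} and \emph{connected}, to a single identity of the form $\rho^{s}(a)=\pm a$ for the relevant weight word $a\in\mathbb{P}_2$, so that Lemma \ref{lemma:rho-a} and Corollary \ref{cor:rho-a=a} can be applied directly. First I would fix the oriented train track that carries the chosen candidate surface and read off, on a convenient branch (for instance the branch carrying the $m$ pairs of slope-$p_i/q_i$ arcs created at the start of the Hatcher--Oertel construction of \cite{MR1030987} inside one ball), the weight $a$ recording the transverse orientation of each strand, so that its length is governed by the number of sheets $m$. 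Because the surface is non-separating with boundary representing $\pm 1$ in $H_1(\partial E)$, the remark preceding Lemma \ref{lemma:rho-a} gives $\Sigma(a)=\pm 1$; in particular $a$ is already primitive under cyclic shifts, by Lemma \ref{lemma:rho-a}.

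Next I would track how the construction acts on the strands as one travels the edgepaths and closes the surface up along the common vertical line supplied by condition (E3). The point of the \emph{monochromatic} hypothesis is that there is essentially a single type of saddle throughout, so that each ball contributes a cyclic shift and the net effect of going once around the surface is a single shift $\rho^{s}$ of $a$, together with a global sign coming from the parity of the twisting. Concretely I expect $s=2(x-y)$, where $x$ and $y$ count the (possibly fractional) saddles used on the two final edges meeting along the vertical line, and where monochromaticity is exactly what forces $|x|,|y|<m/2$. In this language \emph{connectedness} is the statement that the shift acts transitively on the sheets, i.e.\ $\gcd(s,m)=1$, while \emph{orientability} says that the orientation word returns to itself up to sign, $\rho^{s}(a)=a$ or $\rho^{s}(a)=-a$ (using Lemma \ref{lemma:prop-rho-and-J}(2) to commute $\rho$ with negation).

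The conclusion then follows by combining these two facts. In the orientation-preserving case $\rho^{s}(a)=a$, Lemma \ref{lemma:rho-a} forces $m\mid s$; together with $\gcd(s,m)=1$ this collapses the shift and yields exactly the minimum number of sheets. In the orientation-reversing case $\rho^{s}(a)=-a$, applying $\rho^{s}$ again gives $\rho^{2s}(a)=a$, whence $m\mid 2s$, and with $\gcd(s,m)=1$ this pins the excess multiplicity down to a factor of two, i.e.\ twice the minimum; here the surface is recognised as the orientation double cover of the minimal-sheet surface. When $m$ is odd I would instead feed the identity $\rho^{2(x-y)}(a)=a$ together with the bounds $|x|,|y|<m/2$ into Corollary \ref{cor:rho-a=a} to conclude $x=y$, again killing the shift and returning the minimum.

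The genuinely delicate step is the middle one: rigorously translating ``orientable and connected'' into the clean identity $\rho^{2(x-y)}(a)=\pm a$, and justifying the bound $|x|,|y|<m/2$ purely from the monochromatic hypothesis, since this is where one must follow the two saddle choices of \cite{MR1030987} through the gluing and check that a color change would destroy the single-shift description (and so could in principle allow more than a doubling). I also expect to treat the even case $m=2m_0$ by a separate argument, because Corollary \ref{cor:rho-a=a} requires $|a|$ odd; there I would argue directly that the orientation double cover accounts for $m=2m_0$ and that no connected orientable candidate survives at $3m_0$ or beyond, the coprimality forced by connectedness being incompatible with the divisibility forced by orientability.
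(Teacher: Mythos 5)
Your proposal takes a genuinely different route from the paper, but it has two gaps, and they sit exactly where the work needs to be done. First, the step you yourself flag as ``genuinely delicate'' --- translating \emph{orientable and connected} into a single identity $\rho^{2(x-y)}(a)=\pm a$ with $|x|,|y|<m/2$ --- is never carried out, and there is no reason to expect it holds in the generality required. A candidate surface is built by gluing $n$ tangle pieces, and each gluing contributes a \emph{pair} of relations between the orientation words of adjacent tangles; this is visible in the paper's own use of the machinery in Theorems \ref{thm:no-typeii} and \ref{thm:no-typeiii}, where the systems of equations and their manipulations differ case by case according to the colors involved. Collapsing such a system of relations among $n$ words into one cyclic-shift identity on a single word is precisely the content you would have to prove, and monochromaticity by itself is not the reason it would happen. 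What monochromaticity actually buys (via Theorems 3.5 and 3.6 of \cite{JFPretzel}, which is what the paper invokes) is that the diagram of possible orientations --- hence connectedness and orientability --- is independent of the choice of saddles and depends only on the number of sheets $m$.

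Second, even granting your reduction, the arithmetic at the end does not prove the statement. The conclusion to be reached is $m=m'$ or $m=2m'$, where $m'$ is the minimum number of sheets, i.e.\ the smallest integer making every $m'|\gamma_i|$ integral; $m'$ can be larger than $1$ because final edges may be fractional. Your combination of ``$\gcd(s,m)=1$ (connectedness)'' with ``$m\mid s$ (orientability via Lemma \ref{lemma:rho-a})'' forces $m=1$, and your orientation-reversing case forces $m\leq 2$; nothing in your argument involves the lengths $|\gamma_i|$, so the quantity $m'$ never enters, and the claim that this ``yields exactly the minimum number of sheets'' is a non sequitur whenever $m'>1$. (A smaller issue: you derive $\Sigma(a)=\pm 1$ from non-separation, which is not a hypothesis of this lemma.) For contrast, the paper's proof runs in the opposite direction: using the saddle-independence above, it suffices to exhibit, for every multiple $m$ of $m'$ with $m>2m'$, \emph{one} candidate surface with $m$ sheets that is disconnected or non-orientable. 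These are produced concretely: if the $m'$-sheet surface $F'$ is orientable, take $m/m'>2$ parallel copies (orientable but disconnected); if $F'$ is non-orientable, use the orientable double $F''=\partial\eta(F')$ with $2m'$ sheets, taking $k>1$ copies of $F''$ when $m=2km'$, and $t$ copies of $F''$ plus one copy of $F'$ (non-orientable) when $m=(2t+1)m'$. That counting-by-construction mechanism is the missing idea in your approach.
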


\begin{proof}
  By \cite[Theorem 3.6 and Theorem 3.5]{JFPretzel} the diagram of possible orientations on a monochromatic edgepath does not depend on the choice of saddles, this implies that the number of components and orientability of a candidate surface does not depend on the choice of saddles. Then, if we fix the number of sheets $m$, all the candidate surfaces associated to the edgepath system with $m$ sheets have the same number of components and the same type of orientability. 
  
  Let $m'$ be the minimum number of sheets, observe that $m'$ divides $m$. By the previous paragraph, we only need to construct a non-connected or non-orientable candidate surface for any multiple $m$ of $m'$ greater than $2m'$.
  
  Let $F'$ be the candidate surface associated to the given edgepath system but with $m'$ sheets. If $F'$ is orientable, by taking $m/m'$ parallel copies of $F'$, we will obtain an orientable candidate surface $F$ associated to the edgepath system with $m$ sheets, but with $m/m' > 2$ connected components. And we are done in this case.
  
  In case that $F'$ is non-orientable, we can take $F'' = \partial \eta (F')$ the boundary of a regular neighborhood of $F'$. This surface is now orientable and has $2m'$ sheets. Applying the same constructions as above, we can find a candidate surface with $m = 2km'$ sheets, orientable, but with $k > 1$ connected components. But if $m$ is an odd multiple of $m'$ (let say, $m = (2t+1)m'$ ) we can construct a candidate surface of $m$ sheets by taking $t$ parallel copies of $F''$ and one copy of $F'$, obtaining in this way a non-orientable candidate surface.
\end{proof}

The previous lemma allows us, in the case of monochromatic edgepaths, to restrict our attention to the minimum number of sheets; twice is not relevant for us, because for ICON surfaces the number of sheets has to be odd.

\begin{corollary}\label{cor:vertexes-implies-coonected-bounday}
 Let $\gamma_i$ be a system of edgepaths satisfying conditions E1-E4 with ends at vertexes of $\mathcal{D}$, then the associated candidate surface will be connected, orientable and non-separating if and only if its boundary is connected. 
\end{corollary}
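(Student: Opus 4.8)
The plan is to prove the two implications separately, in each case translating the topological hypotheses on the candidate surface $F$ into the oriented--weight formalism at the common ending vertical line. Since every $\gamma_i$ ends at a vertex, its final edge is a full edge; hence, for a number of sheets $m$, the surface meets each ball $B_i$ in exactly $2m$ arcs of a single slope, and after gluing along the ending sphere both the components of $F$ and the components of $\partial F$ are read off from the induced permutation of arc--ends. In the language of \cite{JFPretzel} this permutation is encoded by a word $a\in\mathbb{P}_2$ of length $|a|=m$, and, as noted before Corollary \ref{cor:rho-a=a}, the fact that $\partial F$ represents $1$ in $H_1(\partial E)$ forces $\Sigma(a)=\pm 1$; this is exactly the hypothesis under which Lemma \ref{lemma:rho-a} and Corollary \ref{cor:rho-a=a} apply.

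For the implication from \emph{connected boundary} to \emph{connected, orientable and non--separating} I would argue in three steps. First, non--separating: an essential simple closed curve does not bound mod $2$ on $\partial E$, so $[\partial F]\neq 0$ in $H_1(\partial E;\mathbb{Z}/2)$; since $[\partial F]=\partial[F]$ under $\partial\colon H_2(E,\partial E;\mathbb{Z}/2)\to H_1(\partial E;\mathbb{Z}/2)$, we get $[F]\neq 0$, so $F$ cannot separate. Second, connectedness: in the Hatcher--Oertel construction every component of the candidate surface carries boundary on $\partial E$, so with a single boundary curve there can be only one component. Third, the delicate point, orientability: a connected boundary forces the first--return permutation of the arc--ends to be a single $m$--cycle, and by Lemma \ref{lemma:rho-a} (using $\Sigma(a)=\pm 1$) the word $a$ then admits no nontrivial rotational symmetry, so by the orientation classification of \cite[Thms.~3.5 and 3.6]{JFPretzel} the signs recorded by $a$ are globally consistent and $F$ is two--sided.

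For the converse, orientability and connectedness of $F$ yield a single coherently oriented weight word $a$, and non--separating again gives $\Sigma(a)=\pm 1$. Lemma \ref{lemma:rho-a} then shows that the boundary permutation consists of a single cycle, so $\partial F$ is connected; here the oddness hypothesis of Corollary \ref{cor:rho-a=a} (available for the ICON surfaces of interest, for which $m$ is odd) is what rules out the proper periodicities of $a$ that would split $\partial F$ into several components.

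The main obstacle I expect is the orientability half of the first implication: connectedness of the boundary of an abstract surface never implies orientability by itself, as a M\"obius band shows, so the argument must exploit the rigidity of the construction with ends at vertices rather than soft topology. Concretely, I expect to spend most of the effort establishing that a single $m$--cycle of arc--ends, together with $\Sigma(a)=\pm 1$, rules out the orientation--reversing identification that would make $F$ one--sided; Lemma \ref{lemma:rho-a} and Corollary \ref{cor:rho-a=a} are precisely the combinatorial inputs that convert $\Sigma(a)=\pm 1$ and the primitivity of $a$ into this exclusion.
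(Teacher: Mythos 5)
Your proposal takes a genuinely different route from the paper, but both halves of it have real gaps. For the direction the paper actually proves and later uses (connected, orientable, non-separating $\Rightarrow$ connected boundary), your key step is the assertion that Lemma \ref{lemma:rho-a} ``shows that the boundary permutation consists of a single cycle.'' Lemma \ref{lemma:rho-a} only says that a word $a$ with $\Sigma(a)=\pm 1$ has no nontrivial rotational period; nowhere in the paper (or in your argument) is a dictionary established between periods of $a$ and the number of \emph{boundary} circles -- the weight words control the solution space of the orientation equations, hence the number of components of the \emph{surface}, while connectivity of the boundary is obtained in the paper only via the number of sheets. Worse, your argument for this direction never uses the ends-at-vertexes hypothesis in any essential way; yet without that hypothesis the implication is false, since the paper itself recalls (from \cite{JFPretzel}) that some pretzels $P(p,q,-r)$ admit ICON surfaces -- connected, orientable, non-separating -- with disconnected boundary. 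What is missing is precisely the paper's mechanism: ends at vertexes force the minimum number of sheets to be $1$; orientability plus ends at vertexes forces every edgepath to be monochromatic by \cite[Corollary 3.8]{JFPretzel}; then Lemma \ref{thm:monochromatic-system-use-minimum-sheets} bounds the number of sheets to $1$ or $2$; and non-separating forces the number of sheets to be odd, hence $1$, hence $\partial F$ connected. You invoke none of these, and Corollary \ref{cor:rho-a=a} cannot substitute for them; in the paper it is used inside the gluing analyses of Theorems \ref{thm:no-typeii} and \ref{thm:no-typeiii}, not as a shortcut around them.

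The converse direction, where you concentrate your effort, cannot be repaired along your lines: the argument is circular, and the implication itself fails. The oriented weight word $a\in\mathbb{P}_2$ with $\Sigma(a)=\pm 1$ that you take as input exists exactly when the sign equations of the oriented train track admit a solution, i.e.\ exactly when the candidate surface is orientable; so it cannot serve as the starting point of a proof of orientability. A concrete counterexample: for $P(3,3,2)$ the edgepath system $\langle 1/3\rangle \to \langle 0\rangle$, $\langle 1/3\rangle \to \langle 0\rangle$, $\langle 1/2\rangle \to \langle 0\rangle$ satisfies E1--E4 and ends at vertexes, and any associated one-sheeted candidate surface has connected boundary (one sheet) but is non-orientable -- this follows from the paper's own Theorem \ref{thm:no-typeii}, since this system mixes White and Red colors, and the ``Case White with Red'' analysis shows odd-sheeted surfaces are non-orientable; geometrically it is the non-orientable pretzel (checkerboard) spanning surface. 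Note that the paper's own proof establishes only the forward implication, which is the only one used later (in Lemma \ref{lemma:ponope}); your instinct that the orientability half is ``the delicate point'' is correct, but the honest resolution is that this half is not provable as stated, not that more combinatorics is needed.
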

\begin{proof}
  Observe that the minimum number of sheets is one, because the edgepath system has ends at vertexes (the lengths $|\gamma_i|$ are integers).
  
  By \cite[Corollary 3.8]{JFPretzel}, non-monochromatic and ending at vertexes of $\mathcal{D}$ will imply non-orientable. Then, all the edgepaths are monochromatic, and by the previous theorem (Theorem \ref{thm:monochromatic-system-use-minimum-sheets}) the number of sheets has to be $1$ or $2$. But non-separating implies that the number of sheets is odd, so the number of sheets is equal to one. Then the boundary is connected.
\end{proof}

\section{Alternating Montesinos knots}

 Recall that all candidate surfaces can be divided into three types: I, II and III. First we start by proving that no alternating Montesinos knots has a type II or III ICON candidate surface with disconnected boundary. There is another type of candidate surfaces, named \emph{extended candidate surfaces} in \cite{MR1030987}, but we are not considering them here, because as noted in \cite{MR1030987}, there are no extended candidate surfaces for three tangles Montesinos knots.
 
\begin{theorem}\label{thm:no-typeii}
 Let $S$ be an orientable type II candidate surface with $m$ sheets, where $m$ is an odd number. Then $S$ is connected if and only if $m=1$.
\end{theorem}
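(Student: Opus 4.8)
The plan is to reduce the connectedness of $S$ to a statement about a single permutation of the $m$ sheets, and then to show that for a type II surface this permutation is forced to be the identity, so that $S$ has exactly $m$ components. First I would encode $S$ by its oriented train track. Since $S$ is type II, all the edgepaths $\gamma_i$ terminate on the line $x=0$ at integer vertexes $\langle n_i\rangle$, and near these ending edges the surface meets each ball $B_i$ in $m$ sheets whose orientations are recorded by a weight word $a\in\mathbb{P}_2$ with $|a|=m$. Because $\partial F$ represents $\pm 1$ in $H_1(\partial E)$, we have $\Sigma(a)=\pm 1$, and as $m$ is odd we are exactly in the hypotheses of Corollary~\ref{cor:rho-a=a}. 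The components of $S$ are in bijection with the cycles of the \emph{sheet monodromy} $\sigma$, the permutation of $\{1,\dots,m\}$ obtained by following one sheet once around the cyclic union $B_1\cup\cdots\cup B_n$ and past the two poles of the link back to its starting edge; thus $S$ is connected if and only if $\sigma$ is an $m$-cycle.

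The second step is to identify $\sigma$ explicitly. Since $S$ is orientable, the gluings across the four-punctured spheres $\partial B_i$ and the cappings at the two poles are orientation preserving, so $\sigma$ acts on the weight word as a pure rotation $\rho^{\,d}$ rather than as a reflection $J$ or a sign change. Tracking how the integer-slope arcs close up around the two poles, the rotation amount splits as $d=2(x-y)$, where $x$ and $y$ count the saddle connections accumulated at the two poles respectively; the description of the final (possibly fractional) edge of the edgepaths forces $|x|,|y|<m/2$. Moreover, for the surface to close up consistently, the monodromy must fix the weight word it is defined on, i.e. $\rho^{\,2(x-y)}(a)=a$. This is the genuinely type-II-specific point: at an integer endpoint the arcs connect ``horizontally,'' so a full loop contributes only a rotation of the sheets and yields this clean relation, whereas at a non-integer (type I) endpoint the sheets interleave and no such relation is available.

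With $\sigma$ in hand the conclusion is immediate from the preliminaries. Applying Corollary~\ref{cor:rho-a=a} to $\rho^{\,2(x-y)}(a)=a$, and using that $\Sigma(a)=\pm 1$, that $m$ is odd, and that $|x|,|y|<m/2$, we get $x=y$, hence $\sigma=\rho^{0}=\mathrm{id}$. Therefore $\sigma$ has exactly $m$ cycles and $S$ has $m$ connected components, so $S$ is connected precisely when $m=1$; the reverse implication is trivial, since a single sheet produces a single component.

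The hard part will be the second step: reading off the exact form $\rho^{\,2(x-y)}$ of the sheet monodromy, together with the bounds $|x|,|y|<m/2$, directly from the Hatcher--Oertel construction of a type II surface. In particular I expect the delicate bookkeeping to be (i) checking that orientability genuinely rules out the reflection ($J$) and negation contributions, leaving a pure power of $\rho$, and (ii) verifying that the self-consistency of the central gluing is exactly the relation $\rho^{\,2(x-y)}(a)=a$ with the stated bounds. Once that combinatorial analysis is secured, Corollary~\ref{cor:rho-a=a} (equivalently Lemma~\ref{lemma:rho-a}) finishes the proof essentially for free.
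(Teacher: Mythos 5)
Your proposal correctly identifies the finishing move (Corollary \ref{cor:rho-a=a} applied to a relation of the form $\rho^{2(x-y)}(a)=a$), but everything that makes the theorem true is concentrated in your ``second step,'' which you explicitly defer, and that step is not a bookkeeping exercise: it is the entire proof. The paper's argument is a case analysis over the \emph{colors} of the edgepaths (Red $\langle 1/0,0/1\rangle$, Green $\langle 1/0,1/1\rangle$, White $\langle 1/1,0/1\rangle$), using the diagrams of possible orientations from \cite{JFPretzel}. The gluing relations between consecutive tangles are not pure powers of $\rho$: they genuinely involve $-J$, e.g. $\rho^{\pm\alpha}(a)=-J(\rho^{\mp r}(b))$ in the White/Red case and $\rho^{\pm s}(b)=-J(\rho^{\mp r}(c))$ in the Red/Green case. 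Orientability does not ``rule out'' these reflection terms a priori; rather, one must show that whenever they cannot be cancelled the surface is forced to be non-orientable (White with Red, White with Green, via $b=-J(b)$) or the gluing is outright impossible (Red with Green, via $\Sigma(b)=-\Sigma(b)=0$). Only after eliminating all mixed-color configurations do the same-color relations collapse, via Corollary \ref{cor:rho-a=a}, to $b_i=b_{i+1}$. Your plan assumes the conclusion of this analysis as its starting point.

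Moreover, your intermediate claim --- that the sheet monodromy is a pure rotation, is forced to be the identity, and hence $S$ has exactly $m$ components --- is false for a general type II surface. In the quasi-monochromatic Red (or Green) case the weight word carries an \emph{internal} constraint $b_1 = c \oplus d \oplus -J(c)$ with $|c|=\max\{r_1,s_1\}$ and $|d|$ odd: the saddles near the pole identify sheet $j$ with sheet $m+1-j$ (with a sign flip) for $j\le |c|$. These are reflection-type self-identifications inside a single ball, invisible to a ``cycles of a rotation'' count, and they give $|c|+|d|$ components rather than $m$. The theorem survives because $|d|$ odd still forces $|c|+|d|\ge 2$ whenever $m>1$, but this is an extra argument your framework cannot produce: components of $S$ are not in bijection with cycles of a single permutation obtained by composing sheet-to-sheet gluings, precisely because the surface folds back on itself within a tangle. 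So the proposal, as it stands, has a genuine gap at its core, and the clean monodromy picture it relies on would have to be abandoned, not merely verified, to complete the proof.
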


\begin{proof}
As we seen in \cite[Theorem 3.9]{JFPretzel}, if a type II candidate surface is associated to an edgepath system consisting of monochromatic edgepaths of color $\langle 1/1, 0/1\rangle$ then it has exactly $m$ connected components. 

Now, if there are other colors, they can not be extended more than a fraction of an edge (by monochromaticity condition in \cite[Corollary 3.8]{JFPretzel}) so they are monochromatic or quasi-monochromatic. Then, the diagram of possible orientation can be described using \cite[Theorem 3.7]{JFPretzel}. 

As vertical edgepaths are of color $\langle 1/1, 0/1\rangle$, the possible edgepaths for type II candidate surfaces can be one of the following: 

\begin{itemize}
 \item Red: Monochromatic of color $\langle 1/0 , 0/1 \rangle$ with possibly a fraction of an edge of color  $\langle 1/1 , 0/1 \rangle$ (quasi-monochromatic)
 \item Green: Monochromatic of color $\langle 1/0 , 1/1 \rangle$ with possibly a fraction of an edge of color $\langle 1/1, 0/1 \rangle$ (quasi-monochromatic)
 \item White: Monochromatic of color $\langle 1/1 ,  0/1 \rangle$.
\end{itemize}

The cases Red and Green can be quasi-monochromatic or monochromatic depending on whether or not the edgepath contains a fraction of a vertical edge. On both cases, the diagram of possible orientations described in \cite[Theorem 3.7]{JFPretzel} applies here if we allow $r$ and $s$ to be zero. The diagram of possible orientations is drawn in Fig. \ref{fig:diag-wrg-cases}.

\begin{figure}\label{fig:diag-wrg-cases}
  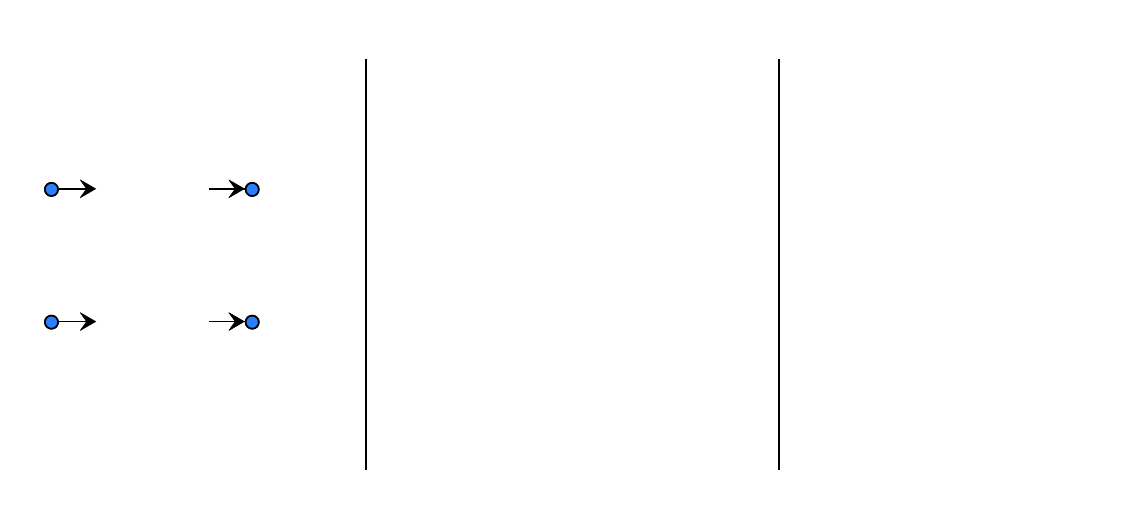
  \caption{Diagram of orientations for the White, Red and Green cases}
\end{figure}

The first case is when all edgepaths are of the same color, without lost of generality, let say they are Red.  So when we identify two diagrams we obtain the relations: $\rho^{\pm s_1}(b_1) = \rho^{\mp r_2}(b_2)$ and $\rho^{\mp s_1}(b_1) = \rho^{\pm r_2}(b_2)$, ($r_2$ and $s_1$ can be zero) from these equations we deduce that $\rho^{2(\pm s_1 \pm r_2)}(b_1) = b_1$. By Corollary \ref{cor:rho-a=a} it follows that $\pm s_1 = \mp r_2$. Then, as $\rho^{\pm s_1}(b_1) = \rho^{\mp r_2}(b_2)$ we can cancel $\rho^{\pm s_1}$ and obtain $b_1 =b_2$. In resume, the relations obtained after identifying two tangles, are equivalent to $b_i = b_{i+1}$. So for every value of $b_1$ we get a solution of the equation system, so the candidate surface will have as many connected components as values has $b_1$. But, recall that the first $\max\{r_1,s_1\}$ letters of $b_1$ are equal to their corresponding last ones, i.e., $b_1 = c \oplus d \oplus -J(c)$ where $|c|=\max\{r_1, s_1\}$ and $|d|$ is odd. This implies that $F$ has $|d| + |c|$ connected components, but because $|d|$ is odd,  $F$ will be connected only if $m=1$.

Analogously, we can apply this argument if all edgepaths were Green and, as we saw in \cite[Theorem 3.9]{JFPretzel}, if all edgepaths were White. 

Now, the remaining cases are when there are at least two edgepaths of different color. In that case, we have to analyze three different equations resulting after gluing.

\emph{Case White with Red}. As we can see on Fig. \ref{fig:diag-wrg-cases}, after identifying the right side of the White diagram of orientations with the left side of the Red diagram, we obtain the following system of equations:

\begin{eqnarray*}
\rho^{\pm\alpha}(a)&=&-J(\rho^{\mp r}(b))\\
\rho^{\pm\alpha}(a)&=& \rho^{\pm r}(b)
\end{eqnarray*}

Combining this system of two equations, we get that $\rho^{\pm r}(b) = \rho^{\pm\alpha}(a) = -J(\rho^{\mp r}(b)) = \rho^{\pm r} (-J(b))$. Then, after canceling $\rho^{\pm r}$ we get that $b = -J(b)$, but this will imply that the value in the middle of $b$ will be equal to its negative. The same will happen if we identify the left side of the White diagram with the right side of the Red one. This means that in this case candidates surfaces with an odd number of sheets are non-orientable.

\emph{Case White with Green}. This is analogous to the previous one, it is impossible because after identifying White with Green, we will get similar equations as before.

\emph{Case Red with Green}. After identifying the right side of the Red diagram with left side of the Green we obtain the following equations (see Fig. \ref{fig:diag-wrg-cases}):

\begin{eqnarray}\label{eqn:red-green-eqns}
\rho^{\pm s}(b)&=&-J(\rho^{\mp r}(c))\\
\rho^{\mp s}(b)&=& \rho^{\pm r}(c)
\end{eqnarray}

This will get us $\rho^{2(\pm s \pm r )}(c) = c$, but by Corollary \ref{cor:rho-a=a}, this is only possible if $\pm s = \mp r$, so $\rho^{\mp s} = \rho^{\pm r}$ . Now canceling $\rho^{\mp s} = \rho^{\pm r}$ on the second equation of \eqref{eqn:red-green-eqns} we obtain $b=c$. After replacing $\mp r$ by $\pm s$ and $c$ by $b$ on the first equation \eqref{eqn:red-green-eqns} we get that $\rho^{\pm 2s}(b) = -J(b)$, then, taking the sum of both sides we get that $\Sigma(b) = \Sigma(\rho^{\pm 2s}(b)) = \Sigma(-J(b)) = -\Sigma(b)$, this implies that $\Sigma(b) = 0$ which is a contradiction. So we can not identify Red and Green diagrams. 


\end{proof}

\begin{theorem}\label{thm:no-typeiii}
  Le $S$ be an orientable type III candidate surface with an odd number of sheets $m$. Then, $S$ is connected if and only if $m=1$.
\end{theorem}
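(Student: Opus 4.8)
The plan is to follow the same strategy used in the proof of Theorem \ref{thm:no-typeii}, exploiting the structural constraints on type III candidate surfaces together with the algebraic machinery of oriented weights. The key difference is that type III surfaces have their ending points at the vertex $\langle 1/0 \rangle = \langle \infty \rangle$, which forces the edgepaths to terminate along edges of the form $\langle \infty, n\rangle$. I would first recall from \cite{MR1030987} and \cite{JFPretzel} how the diagram of possible orientations looks for an edgepath whose final edge abuts $\langle \infty \rangle$, and identify which colors are admissible; as in the type II case, the vertical/infinite edges will play the distinguished role analogous to the color $\langle 1/1, 0/1\rangle$ there. The goal is to enumerate the possible ``colored'' edgepath types (the analogue of the White/Red/Green trichotomy) and then analyze, case by case, the gluing equations that arise from condition (E3).

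Next I would carry out the monochromatic (same-color) case. Here, as in Theorem \ref{thm:no-typeii}, identifying two adjacent diagrams of orientations should yield relations of the form $\rho^{\pm s_1}(b_1) = \rho^{\mp r_2}(b_2)$ together with its sign-flipped companion, from which one deduces $\rho^{2(\pm s_1 \pm r_2)}(b_1) = b_1$. Invoking Corollary \ref{cor:rho-a=a} (using that $m = |b_1|$ is odd and $\Sigma(b_1) = \pm 1$) forces $\pm s_1 = \mp r_2$, hence $b_1 = b_2$ after cancellation. The symmetry constraint on $b_1$ coming from its own endpoints then writes $b_1 = c \oplus d \oplus (-J(c))$ with $|d|$ odd, so the number of connected components equals $|c| + |d|$, which is odd; connectivity therefore forces $m = 1$.

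Then I would treat the mixed-color cases, where at least two edgepaths carry different colors. Each such pairing produces a two-equation system of the shape seen in \eqref{eqn:red-green-eqns}; applying Corollary \ref{cor:rho-a=a} collapses the rotation exponents, and substituting back yields either a relation of the form $b = -J(b)$ (forcing a central letter to equal its negative, hence non-orientability for odd $m$) or, after summing via $\Sigma$, the contradiction $\Sigma(b) = -\Sigma(b)$ with $\Sigma(b) = \pm 1 \neq 0$. In each mixed case the surface is therefore either non-orientable or nonexistent, so no connected orientable type III surface with $m > 1$ survives.

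I expect the main obstacle to be the bookkeeping in the first step: correctly determining the admissible edgepath colors and the exact form of the diagrams of possible orientations near the vertex $\langle \infty \rangle$, since the type III boundary condition differs from type II and one must be sure that the relevant results of \cite{JFPretzel} (the analogues of Theorems 3.6, 3.7 and 3.9 there) apply verbatim or with the $r,s$ allowed to vanish. Once the correct diagrams are in hand, the algebraic core of each case is a direct transcription of the arguments in Theorem \ref{thm:no-typeii}, driven entirely by Corollary \ref{cor:rho-a=a} and the property $\Sigma(\,\cdot\,) = \pm 1$.
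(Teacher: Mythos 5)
Your high-level plan (enumerate the admissible edgepath types near $\langle \infty \rangle$, then push the gluing relations through Corollary \ref{cor:rho-a=a} and the constraint $\Sigma(\cdot)=\pm 1$) points in the right general direction, but there is a genuine gap: you defer exactly the ingredient that makes type III different, and you then predict a case structure that is wrong. For type III the edgepaths continue past $x=0$ toward $\langle \infty \rangle$, and the new phenomenon is the pair of saddles that create the infinity-slope arcs: each edgepath $\gamma_i$ uses $r_i$ saddles of one kind and $s_i$ of the other, and condition (E3) forces $r_i+s_i=\alpha$ to be the \emph{same} for every $i$, with $0<\alpha\leq m$. This bookkeeping produces four admissible types (monochromatic Red, monochromatic Green, quasi-monochromatic White-Red, quasi-monochromatic White-Green), not an analogue of the White/Red/Green trichotomy of Theorem \ref{thm:no-typeii}, and the exponent $\alpha$ enters every gluing relation.

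Because of that $\alpha$, your treatment of the mixed-color cases fails. It is true that gluing a monochromatic with a quasi-monochromatic diagram yields $a=-J(a)$ and hence non-orientability, as you expect. But mixing monochromatic Red with monochromatic Green does \emph{not} reproduce the type II contradiction $\Sigma(b)=-\Sigma(b)$: the relation one obtains is $a_i=-\rho^{\alpha}\circ J(a_{i+1})$, and since $G=-\rho^{\alpha}\circ J$ has order two, the resulting system is consistent. The correct conclusion in the all-monochromatic case is that the surface has $m$ connected components (hence connected only if $m=1$), not that the configuration is impossible. Similarly, in the all-quasi-monochromatic case the two colors coexist, and connectivity is excluded not by a contradiction but by a counting argument: the relations of the form \eqref{eqn:typeIII-quasi-si} and \eqref{eqn:typeIII-quasi-ri} force $a_1=\dots=a_n$ together with matching saddle distributions ($s_i=s_{i+1}$, $r_i=r_{i+1}$, possibly swapped), the solutions are parametrized by $2^{m-\max\{r_1,s_1\}}$ choices of $a_1$, giving $m-\max\{r_1,s_1\}$ components, and the inequality $r_1,s_1<m/2$ then forces $r_1=s_1=0$ and $m=1$. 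Your type II component count, via the symmetry $b_1=c\oplus d\oplus(-J(c))$ with $|c|+|d|$ components, does not transfer verbatim to these diagrams. So the ``direct transcription'' you propose would not close the proof; the infinity-slope saddle analysis and the two surviving mixed configurations require the separate arguments sketched above.
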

\begin{proof}

Recall that an edgepath that arrives at $x=0$, has to be monochromatic at that point; otherwise the associated candidate surface would be non-orientable. But when the edgepath keeps moving to $\langle \infty \rangle$, it can keeps being monochromatic or change to quasi-monochromatic. It is not hard to see, by analyzing the diagram $\mathcal{D}$, that there are only four possible types of edgepaths according to the edge colors they pass through. We will refer to these cases as follows:

\begin{itemize}
 \item \emph{Monochromatic Red}. The edgepath travels only along edges of type $\langle 1/0, 0/1\rangle$
 \item \emph{Monochromatic Green}. The edgepath travels only along edges of type $\langle 1/0, 1/1\rangle$
 \item \emph{Quasi-monochromatic White-Red}. The edgepath travels only along edges of color $\langle 1/1, 0/1\rangle$, but in the last edge it travels along one edge of color $\langle 0/1, 1/0 \rangle$
 \item \emph{Quasi-monochromatic White-Green}. The edgepath travels along edges of color $\langle 1/1, 0/1\rangle$ but in the last edge it travels along one edge of color $\langle 1/1, 1/0 \rangle$
\end{itemize}

For each of these types of edgepaths we can depict the diagram of possible orientations using \cite[Theorem 3.6]{JFPretzel} at $x=0$. To obtain the final diagram of orientations (at $x>0$) we have to analyze what happens with the diagram of orientations when passing trough a saddle. Recall that the saddle has to create a pair of arcs of infinity slope.  The two possible saddles (under ambient isotopy) are depicted on Fig. \ref{fig:saddle-crating-infty-slope}; notice that we marked one saddle with $r$ and the other with $s$; this means, that we are taking $r$ and $s$ parallel saddles, respectively. Moreover, for each edgepath $\gamma_i$, we are going to denote by $r_i$ the number of saddles of the corresponding type and with $s_i$ the other. 

By condition (E3), the value $\alpha  = r_i+s_i$ is the same for all $i$. And as we are not beyond $\infty$ and we are not at type II case, it follows that $0<\alpha\leq m$.

\begin{figure}\label{fig:saddle-crating-infty-slope}
  \centering
  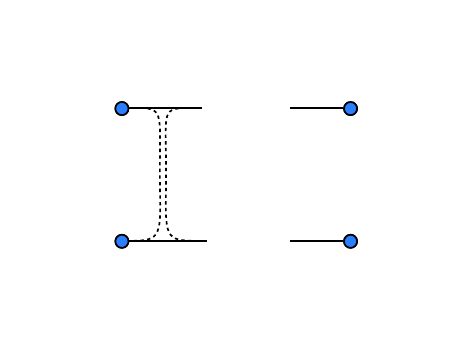
  \caption{Saddles that generate infinity slope pair of arcs.}
\end{figure}

Let us return to the idea raised above. For each of the type of edgepaths above (Monochromatic Red, Green, etc.), we compute the diagram of possible orientation at $x=0$, and then apply their corresponding $\alpha$ saddles. The resulting diagrams are the ones in Fig. \ref{fig:typeIII-mono-cases} and in Fig. \ref{fig:typeIII-quasimono-cases}. For the monochromatic case, do not forget that the first $\max\{r_i,s_i\}$ letters of $a_i$ are equal to the the corresponding last ones but with opposite signs.

\begin{figure}
\centering
   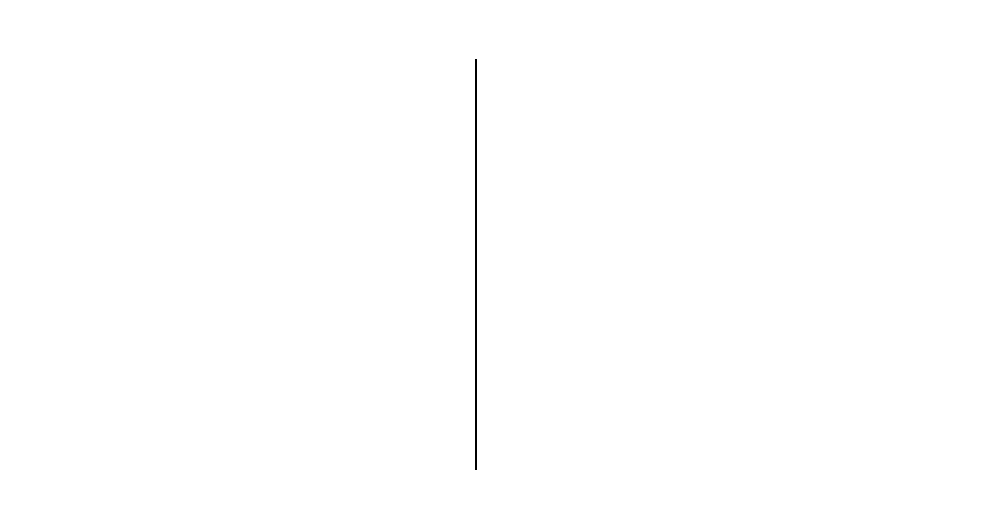
  \caption{Diagram of possible orientations for type III monochromatic cases}
  \label{fig:typeIII-mono-cases}
\end{figure}

\begin{figure}
\centering
   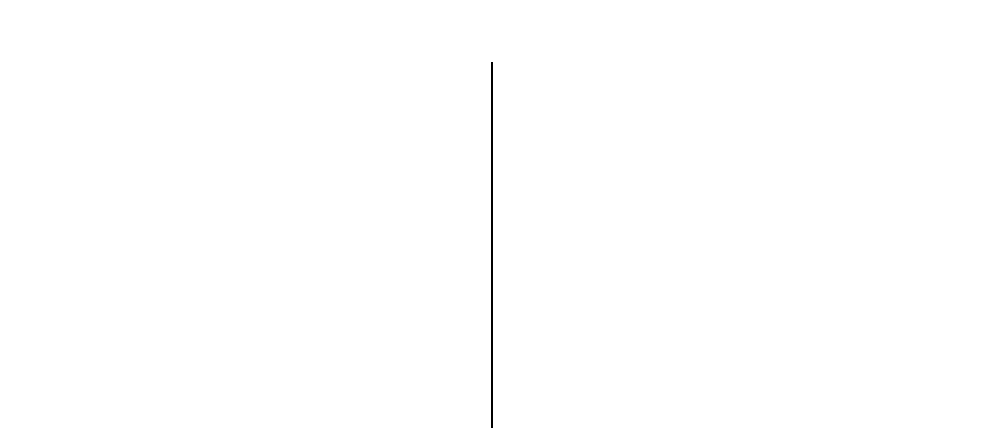
  \caption{Diagram of possible orientations for type III quasi-monochromatic cases}
  \label{fig:typeIII-quasimono-cases}
\end{figure}

When we identify the right side of a monochromatic edgepath (green or red) with the left side of a quasi-monochromatic edgepath or vice versa we obtain a pair of relations as follows:
\begin{eqnarray}
 -J(a_i) &=& \rho^{s_i}(a_{i+1}) \\
 a_i &=& \rho^{-s_i}(a_{i+1})
\end{eqnarray}

After some simple manipulations (see properties of $J$ and $\rho$ at Lemma \ref{lemma:prop-rho-and-J}), we get that $a_{i+1} = -J(a_{i+1})$. This contradicts that $\Sigma(a_{i+1}) = \pm 1$. So, the candidate surface would be non-orientable.  Then, the edgepaths $\gamma_i$ are all monochromatic or all quasi-monochromatic. 

\emph{Case on which all edgepath are monochromatic}. If we glue the right side of a red diagram of orientations with any other monochromatic (red or green), the relations obtained can be reduced to $a_i=a_{i+1}$. But if instead we glue the right side of a green diagram of orientations with any other diagram, the relations can be reduced to $a_i = -\rho^{\alpha}\circ J(a_{i+1})$.

Observe that the function $G = -\rho^{\alpha} \circ J$ has order two, that is, $G \circ G (a) = a$ for all $a \in \mathbb{P}_2$. Then, after considering all the relations resulting from the gluing, we end up with the $a_i's$ equal to $a_1$ or to $G(a_1)$.  One thing to notice is that $a_1 = G(a_1)$ can not happen, because $\Sigma(a_1) = \Sigma(G(a_1)) = \Sigma( -\rho^{\alpha} \circ J(a_1)) = -\Sigma(a_1)$, that implies that $\Sigma(a_1) = 0$, contradicting the fact that $\Sigma(a_1) = \pm 1$. 

Then, for any value of $a_1$, we get a solution for the set of relations. We conclude that the associated candidate surface have $|a_1|=m$ connected components. 

\emph{Case on which all edgepath are quasi-monochromatic}. This case is similar to the previous one. The main difference is the way in which the relations are manipulated. For instance, if we identify the right side of a diagram on Fig. \ref{fig:typeIII-quasimono-cases} and the left side of another of those, we will end with one of the following pair of relations:

\begin{eqnarray}
 \rho^{s_i}(a_i) = \rho^{s_{i+1}}(a_{i+1}) &and& \rho^{-s_i}(a_i) = \rho^{-s_{i+1}}(a_{i+1}) \label{eqn:typeIII-quasi-si}\\
 &or&\nonumber\\
 \rho^{r_i}(a_i) = \rho^{s_{i+1}}(a_{i+1}) &and& \rho^{-r_i}(a_i) = \rho^{-s_{i+1}}(a_{i+1}) \label{eqn:typeIII-quasi-ri} 
\end{eqnarray}

Lets work first with Eq. \eqref{eqn:typeIII-quasi-si}; after simple manipulations and a substitution, we get $\rho^{2(s_{i+1}-s_i)}(a_i) =a_i$. This relation, by lemma \ref{cor:rho-a=a}, implies that $s_{i+1} = s_i$. On Eq. \eqref{eqn:typeIII-quasi-si}, after canceling $\rho^{s_i} = \rho_{s_{i+1}}$ on both sides we obtain that $a_i=a_{i+1}$. As $s_i +r_i = \alpha = s_{i+1} + r_{i+1}$ we also got $r_i = r_{i+1}$.  So, summarizing,  Eq. \eqref{eqn:typeIII-quasi-si} implies that $s_i=s_{i+1}$, $r_i=r_{i+1}$ and $a_i=a_{i+1}$.

The same will happen on Eq. \eqref{eqn:typeIII-quasi-ri}, but we will get instead $s_i=r_{i+1}$, $r_i=s_{i+1}$ and $a_i=a_{i+1}$. 

Then the set of relations is equivalent to $a_1 = a_2 = \dots = a_n$ and the proportions of $r_i$'s and $s_i$'s saddles is the same on each edgepath. implying that the possible orientations of the candidate surface is parametrized by $a_1$, which have $2^{m - \max\{r_1, s_1\}}$ different values. So it has $m - \max\{r_1, s_1\}$ connected components. But recall that $r_1, s_1 < m/2$, then $m - \max\{r_1, s_1\} \geq \max\{r_1,s_1\} + 1$ implying that the candidate surface would be connected only when $r_1=s_1=0$ and $m=1$.
\end{proof}

Until this point, we have proved that for any Montesinos knot there are no ICON surfaces with disconnected boundary of type II or III. As a consequence, there will be no ICON surfaces with disconnected boundary on any alternating three tangle Montesinos knot because, as we will see, there are no type I candidate surfaces for alternating Montesinos knots.

\begin{theorem}\label{thm:prop-z-for-alt-mont}
 Alternating Montesinos knots of three tangles have property $\mathbb{Z}$.
\end{theorem}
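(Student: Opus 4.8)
The plan is to reduce property $\mathbb{Z}$ to a statement about ICON surfaces, dispose of types II and III using the theorems already proved, and then show that an alternating three-tangle Montesinos knot admits no candidate surface of type I at all. By \cite{JFPretzel} it is enough to establish $\pi_1(E/F)\cong\mathbb{Z}$ for every incompressible, compact, orientable, non-separating surface $F$, and by \cite{MR2221530} this holds automatically whenever $\partial F$ is connected; so I only need to rule out ICON surfaces with disconnected boundary. The Hatcher--Oertel classification \cite{MR1030987} presents every incompressible surface as a candidate surface of type I, II or III (for three tangles there are no extended candidate surfaces), and the discussion preceding the statement already shows that no ICON surface of type II or III has disconnected boundary. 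Thus the theorem will follow once I show there are no type I candidate surfaces for an alternating Montesinos knot.

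To handle type I, I would first invoke the standard normalization of the alternating condition: an alternating Montesinos knot can be written as $M(p_1/q_1,\dots,p_n/q_n)$ with all slopes of the same sign, say $p_i/q_i>0$, so that each starting vertex $\langle p_i/q_i\rangle$ lies strictly above the line $y=0$ in $\mathscr{D}$. The key observation is that in the open region $x>0$ no edge of $\mathscr{D}$ joins a vertex of positive height to one of negative height: a vertex with $x>0$ is of the form $\langle p/q\rangle$ with $q\ge 2$, and if $\langle p/q\rangle$ and $\langle r/s\rangle$ were joined by an edge with $q,s\ge 2$ and $p>0>r$, then $ps-qr=ps+q|r|\ge 2+2=4$, contradicting $ps-qr=\pm1$. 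Horizontal edges preserve height and the edges incident to $\langle\infty\rangle$ lie in $x\le 0$, so they are irrelevant here.

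Now a type I edgepath system stays, by the monotonicity condition (E4), inside the region $x>0$ and ends on a common vertical line $x=c$ with $c>0$. Since each $\gamma_i$ begins above $y=0$ and, by the observation above, can never cross $y=0$ while $x>0$, every ending height $y_i$ is strictly positive; but condition (E3) requires $\sum_i y_i=0$, which is impossible. Hence no type I candidate surface exists, and mirroring covers the all-negative case. Combining this with the exclusion of types II and III, every ICON surface of an alternating three-tangle Montesinos knot has connected boundary and therefore property $\mathbb{Z}$, which by the reduction above yields property $\mathbb{Z}$ for the knot.

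I expect the one genuinely delicate point to be the normalization step rather than the $\mathscr{D}$-combinatorics: I must make sure that the chosen characterization of alternating Montesinos knots (including any integer part and the degenerate tangles with $q_i=1$) really does place all starting vertices on the same side of $y=0$, since this is exactly the hypothesis that powers the (E3) positivity argument. Once that is pinned down, the remaining estimates are elementary.
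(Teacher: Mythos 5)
Your proposal is correct and follows essentially the same route as the paper: reduce property $\mathbb{Z}$ to ICON surfaces with disconnected boundary, dispose of types II and III via Theorems \ref{thm:no-typeii} and \ref{thm:no-typeiii}, note the absence of extended candidate surfaces for three tangles, and rule out type I by observing that when all $p_i/q_i>0$ the edgepaths keep strictly positive height for $x>0$, so condition (E3) cannot hold. The only difference is that you make explicit the determinant argument ($ps-qr=\pm1$ forbids edges joining vertices of opposite height in $x>0$) that the paper leaves as an assertion, and you correctly flag the normalization of the alternating condition, which the paper handles by citing \cite{Alternating}.
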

\begin{proof}
 By  \cite{Alternating}, Montesinos knots reach their crossing number on a simplified Montesinos diagram of the form $M(p_1/q_1, \dots, p_n/q_n)$; this implies that if the Montesinos knot is alternating, the simplified diagram should be an alternating diagram. But a Montesinos diagram is alternating if and only if all rationals $p_i/q_i$ have the same sign.
 
 
 Without loss of generality, we can assume that $p_i/q_i >0$ for all $i$. Now, any edgepath $\gamma_i$ starts in a positive coordinate point and when it travels to the right it keeps with positive coordinate until $x=0$. So, there is no value $x>0$ such that $\gamma_1(x) + \gamma_2(x) + \dots + \gamma_n(x) = 0$, this is, there will be no candidate surfaces of type I. 
 
 By Theorems \ref{thm:no-typeii}  and \ref{thm:no-typeiii}, the type II or III ICON candidate surfaces have connected boundary. And by \cite{MR2221530} surfaces with connected boundary have property $\mathbb{Z}$.
 
 The only remaining possible candidate surfaces are the extended candidate surfaces. But, in the case of $n=3$, there are no extended candidate surfaces.
\end{proof}

We could prove theorem \ref{thm:prop-z-for-alt-mont} thanks to the non-existence of extended candidate surfaces, but for more than three tangles this surfaces exist; we were able to construct an extended candidate surface for a Montesinos knot with nine tangles.

\section{Pretzel knots}

As we saw in \cite{JFPretzel} there are ICON surfaces with disconnected boundary on the exterior of some pretzels $P(p,q,-r)$ and those knots have Property $\mathbb{Z}$. In this paper we will prove that for the other pretzels of the form $P(p,q,r)$ there are no ICON surfaces with disconnected boundary. So the result claimed in the abstract will be true; this is because for surfaces with connected boundary the result is well known to be true (see \cite{MR2221530}). 

In order to simplify the statement of the following theorem, we introduce the following definition:

\begin{definition}
 A triple $(a,b,c)$ of positive integers is an \emph{ICON-allowing triple} if it satisfies that:

   $$\frac{1}{a} + \frac{1}{b} = \frac{1}{c} \quad \text{and} \quad \frac{\text{gcd}(a,b)}{\text{gcd}(a,b,c)} \equiv  1 \pmod{2}$$
  
where $\text{gcd}$ is the greatest common divisor.

\end{definition}

\begin{theorem}[Main theorem]\label{thm:main-theorem}
 A pretzel knot $K= P(p,q,r)$ (with absolute values of $p$, $q$ and $r$ greater then one) has an ICON surface with disconnected boundary in its exterior if and only if $p$, $q$ and $r$ satisfy that: $$(p,q,r) = \pm (2a+1,2b+1, -2c-1)$$ for some ICON allowing triple $(a,b,c)$.
 
\end{theorem}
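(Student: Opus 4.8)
The plan is to reduce the statement to a classification of type~I candidate surfaces, since by Theorems~\ref{thm:no-typeii} and~\ref{thm:no-typeiii} any ICON surface of type~II or~III on a pretzel $P(p,q,r)$ has connected boundary, and for $n=3$ there are no extended candidate surfaces. Thus an ICON surface with disconnected boundary must be of type~I, which requires a solution $x>0$ of the equation $\gamma_p^{\pm}(x)+\gamma_q^{\pm}(x)+\gamma_r^{\pm}(x)=0$ from~\eqref{eqn:condit-E3-eqn}. First I would note that, by the discussion preceding Theorem~\ref{thm:prop-z-for-alt-mont}, a type~I surface can only exist when the tangles do not all have the same sign; combined with the symmetry $P(p,q,r)=-P(-p,-q,-r)$, this lets me reduce to the case of exactly one negative tangle, writing $(p,q,r)=\pm(2a+1,2b+1,-2c-1)$ as the parity requirement forces all three entries to be odd (even entries would make the corresponding $1/q_i$ edgepath end at a vertex, steering us back to the connected-boundary case via Corollary~\ref{cor:vertexes-implies-coonected-bounday}).

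Next I would solve~\eqref{eqn:condit-E3-eqn} explicitly. Using the formulas~\eqref{eqn:gama_i} and~\eqref{eqn:gama_-i} for $\gamma^{\pm}$, the eight sign choices collapse to a small number of genuinely distinct linear equations in $x$ on the relevant interval $0\le x\le 1-1/q_i$. Solving these and imposing $x>0$ together with condition~(E3) (the common ending $x$-coordinate) should pin down the common endpoint to a rational point whose $x$-coordinate is governed precisely by the relation $\tfrac1a+\tfrac1b=\tfrac1c$; this is where the first half of the ICON-allowing condition must emerge, as the arithmetic constraint for the three edgepaths to terminate on a common vertical line with heights summing to zero. I expect the endpoint to land at a vertex of $\mathscr{D}$, so that Corollary~\ref{cor:vertexes-implies-coonected-bounday} becomes applicable and reduces the disconnected-boundary question to a count of connected components.

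The heart of the argument, and the main obstacle, is the parity computation that produces the congruence $\gcd(a,b)/\gcd(a,b,c)\equiv 1 \pmod 2$. Here I would build the diagram of possible orientations for the three monochromatic edgepaths meeting at the common endpoint and, as in the proofs of Theorems~\ref{thm:no-typeii} and~\ref{thm:no-typeiii}, translate the gluing conditions into relations of the form $\rho^{t}(a_i)=a_{i+1}$ (and their $-J$ variants) among the oriented weights in $\mathbb{P}_2$. Applying Lemma~\ref{lemma:rho-a} and Corollary~\ref{cor:rho-a=a} to these relations should show that the number of free weight choices—equivalently the number of connected components of the candidate surface—is controlled by a greatest-common-divisor of the edge-lengths, which in turn is expressible through $\gcd(a,b)$ and $\gcd(a,b,c)$. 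The surface has \emph{disconnected} boundary exactly when this count exceeds one, and tracking the parity (recall that for ICON surfaces the number of sheets must be odd) is what converts ``more than one component'' into the stated mod-$2$ condition. The forward direction then follows by exhibiting, for each ICON-allowing triple, an explicit orientable connected type~I candidate surface with disconnected boundary, while the converse follows from the fact that when the congruence fails the only admissible candidate surfaces are either non-orientable, disconnected as surfaces, or forced to have connected boundary.
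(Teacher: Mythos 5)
Your high-level reduction is the same as the paper's: kill types II and III via Theorems~\ref{thm:no-typeii} and~\ref{thm:no-typeiii}, note that three-tangle Montesinos knots admit no extended candidate surfaces, and then classify type~I candidates through equation~\eqref{eqn:condit-E3-eqn}. However, there are two genuine gaps in how you handle type~I. First, your parity shortcut is wrong: you claim that an even entry forces the corresponding edgepath to end at a vertex so that Corollary~\ref{cor:vertexes-implies-coonected-bounday} applies. It does not. The ending point of an edgepath is dictated by condition (E3), not by the parity of $q_i$, and for mixed-parity pretzels such as $P(p,q,-r)$ with $p,q$ odd and $r$ even, type~I candidate surfaces genuinely exist, generally with non-vertex endpoints (the paper's Lemma~\ref{lemma:ponope}, Case 2.1, produces endpoints such as $x=\frac{q-1}{q}\cdot\frac{r-1}{r-2}$). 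Second, and more fundamentally, you never invoke the fact that an ICON surface, being orientable and non-separating, must have boundary slope zero. This zero-slope constraint, computed through $m(F)=\tau(F)-\tau(F_0)$ from~\eqref{eqn:slope} and the twist formulas~\eqref{eqn:tao_gamma_i}, is precisely the tool the paper uses in Lemmas~\ref{lemma:popone} and~\ref{lemma:ponope} to dispose of the type~I candidates in the even cases: one shows their slopes are never zero (or that they end at vertices, whence connected boundary). Without this ingredient your argument has no mechanism at all for eliminating the even-parameter cases, which is where most of the new work in the paper lies.

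Concerning the all-odd case with one negative entry, where ICON surfaces with disconnected boundary actually occur: your expectation that the harmonic relation $\frac{1}{a}+\frac{1}{b}=\frac{1}{c}$ drops out of (E3) is correct in spirit (it appears as $\frac{1}{p-1}+\frac{1}{q-1}=\frac{1}{r-1}$ with $p=2a+1$, etc.), but the zero-slope condition must still be imposed on top of (E3), and your derivation of the congruence $\gcd(a,b)/\gcd(a,b,c)\equiv 1 \pmod 2$ is only a plan (``should show that\dots''), not an argument. The paper does not re-derive this case at all: it cites the companion paper \cite{JFPretzel}, where both the existence of the surfaces for ICON-allowing triples and the converse for $p,q$ odd positive, $r$ odd negative are established. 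So the portion of your proposal that is novel relative to the cited literature is exactly the portion that is missing its key constraint, and the portion you sketch in most detail is the one the paper legitimately outsources.
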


The existence of ICON surfaces and the property $\mathbb{Z}$ are valid under permutations of the braids of the Pretzel and also under a reflexion (changing the sign of the three braids). For that reason, the possibilities for $p,q$ and $r$ can be reduced to the following four:
\begin{enumerate}
 \item $p$, $q$ and $r$ positive integers. 
 \item $p$, $q$ odd positive and $r$ odd negative (studied in \cite{JFPretzel}).
 \item $p$, $q$ odd positive, $r$ even negative.
 \item $p$ odd positive,  $q$ odd negative and $r$ even positive.
\end{enumerate}

Now we divide the proof of the main theorem \ref{thm:main-theorem} on three lemmas (\ref{lemma:popopo}, \ref{lemma:popone} and \ref{lemma:ponope}), each lemma covers a different case from the ones described above. 

\begin{lemma}\label{lemma:popopo}
 There are no ICON surfaces with disconnected boundary in the knot exterior of a pretzel knot of the form $P(p,q,r)$ with $p$, $q$ and $r$ positive odd integers greater than 1.
\end{lemma}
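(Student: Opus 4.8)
The plan is to observe that, with $p,q,r$ positive, the pretzel $P(p,q,r)=M(1/p,1/q,1/r)$ is an \emph{alternating} Montesinos knot: its three tangle slopes $1/p$, $1/q$, $1/r$ are all positive, hence of the same sign, which is precisely the condition for a simplified Montesinos diagram to be alternating. Thus this lemma is the three-tangle, pretzel instance of Theorem \ref{thm:prop-z-for-alt-mont}, and I would reuse the same three-part analysis: after passing to the Hatcher--Oertel classification, every incompressible surface is isotopic to a candidate surface of type I, II, III, or to an extended one, so it suffices to rule out disconnected boundary in each type. I would begin by recording that an ICON surface may be taken connected and orientable, and that, being non-separating, the number of sheets $m$ is odd (as noted after Lemma \ref{thm:monochromatic-system-use-minimum-sheets}).

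First I would eliminate type I. Each edgepath $\gamma_i$ starts on the edge $\langle 1/p_i, 1/p_i\rangle$ at a strictly positive height and, travelling monotonically from right to left by (E4), remains at positive vertical coordinate all the way to the axis $x=0$. Hence there is no $x>0$ with $\gamma_p(x)+\gamma_q(x)+\gamma_r(x)=0$, so condition (E3) cannot be satisfied with positive $x$-coordinate and no type I candidate surface exists. Next I would treat types II and III together. If a connected orientable ICON surface were of one of these types, then since $m$ is odd, Theorems \ref{thm:no-typeii} and \ref{thm:no-typeiii} would force $m=1$. A one-sheeted candidate surface, however, has connected boundary, so such a surface cannot have disconnected boundary. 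Finally, since $n=3$, there are no extended candidate surfaces, as observed in \cite{MR1030987}. Combining these three points shows that no ICON surface in the exterior of $P(p,q,r)$ has disconnected boundary.

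There is little genuine obstacle here, since the argument is essentially a specialization of Theorem \ref{thm:prop-z-for-alt-mont}; the step deserving the most care is the implication ``connected candidate surface with $m=1$'' $\Rightarrow$ ``connected boundary.'' I would justify it by tracking the boundary curves produced in the Hatcher--Oertel construction: with a single sheet the pair of arcs on each $\partial B_i$ assemble, via the gluing permitted by (E3), into one closed curve on $\partial E$, in the same spirit as Corollary \ref{cor:vertexes-implies-coonected-bounday}. I would also confirm at the outset that the type I / II / III / extended trichotomy is exhaustive for incompressible surfaces in this exterior, so that ruling out disconnected boundary in each type settles the lemma.
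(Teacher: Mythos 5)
Your proposal is correct and follows essentially the same route as the paper: the paper's own (very terse) proof likewise rules out type I candidate surfaces via the edgepaths' positivity (phrased there as a failure of condition (E4), while you phrase it as the impossibility of satisfying (E3) at $x>0$, exactly as in the paper's proof of Theorem \ref{thm:prop-z-for-alt-mont}), invokes Theorems \ref{thm:no-typeii} and \ref{thm:no-typeiii} for types II and III, and relies on the absence of extended candidate surfaces for $n=3$. Your added attention to the step ``$m=1$ implies connected boundary'' fills in a detail the paper leaves implicit, but the underlying argument is the same.
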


\begin{proof}
 As we can easily see, there are no Type I candidate surfaces, because no edgepath system will satisfy condition (E4).  Type II and III candidate surfaces are covered by Theorems \ref{thm:no-typeii} and \ref{thm:no-typeiii}.
\end{proof}

\begin{lemma}\label{lemma:popone}
  There are no ICON surfaces with disconnected boundary in the knot exterior of a pretzel knot of the form $P(p,q,-r)$ with $p, q$ and $r$ integers greater than 1; $p$, $q$ odd and $r$ even.
\end{lemma}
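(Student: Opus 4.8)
The plan is to split the candidate surfaces of $M(1/p,1/q,-1/r)$ into types I, II and III and dispose of each. Types II and III are immediate: an ICON surface is orientable, connected and non-separating, so it carries an odd number of sheets, and Theorems \ref{thm:no-typeii} and \ref{thm:no-typeiii} then force $m=1$ and hence (as in the proof of Theorem \ref{thm:prop-z-for-alt-mont}) connected boundary. So the whole content of the lemma lies in the type I candidate surfaces, which now genuinely occur, unlike in Lemma \ref{lemma:popopo}, because the negative tangle $-1/r$ lets an edgepath system satisfy condition (E3) with $x>0$.

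First I would enumerate the non-constant type I systems. Writing the common ending abscissa as $x$ and using \eqref{eqn:gama_i}--\eqref{eqn:gama_-i}, condition (E3) becomes one of eight linear equations $\gamma_{1/p}^{\epsilon_1}(x)+\gamma_{1/q}^{\epsilon_2}(x)+\gamma_{-1/r}^{\epsilon_3}(x)=0$. A direct inspection rules out six of them: those with a $\gamma^+$ on a positive tangle force $x=1$ or $x>1-1/r$ (no admissible ending), or $x=0$ (an ending on the line $x=0$, already covered by types II and III). The choice $(\gamma^-,\gamma^-,\gamma^+_{-1/r})$ degenerates, reading $x\bigl(\tfrac1{p-1}+\tfrac1{q-1}-\tfrac1{r-1}\bigr)=0$, so it gives $x=0$ unless $\tfrac1{p-1}+\tfrac1{q-1}=\tfrac1{r-1}$, in which case a one-parameter family appears. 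This leaves exactly the genuine system $(\gamma^-_{1/p},\gamma^-_{1/q},\gamma^-_{-1/r})$, with ending $x_0=\tfrac{(p-1)(q-1)}{pq-1}$ (admissible when $r(p+q-2)\ge pq-1$), together with that degenerate family; the constant-edgepath endings must also be examined.

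The crux is the orientation bookkeeping on the fractional final edges, and this is exactly where the parity of $r$ enters. Colouring the edges as in the proofs of Theorems \ref{thm:no-typeii} and \ref{thm:no-typeiii}, the two positive tangles contribute white edges $\langle 0/1,1/p\rangle$ and $\langle 0/1,1/q\rangle$ (since $p,q$ are odd), the edgepath $\gamma^-_{-1/r}$ of the genuine system is green, and — this is the decisive point — the colour of the edge $\langle 0/1,-1/r\rangle$ carrying the degenerate family is \emph{red} when $r$ is even but white when $r$ is odd. Thus every surviving type I system for even $r$ forces the gluing of a white diagram of possible orientations to a red or green one. I would then write the resulting relations between the weight words $a_i\in\mathbb{P}_2$ and argue, exactly as in the White--Red, White--Green and Red--Green cases of Theorem \ref{thm:no-typeii}, that they reduce to an identity of the form $a=-\rho^{k}J(a)$; applying $\Sigma$ to both sides and using $\Sigma(a)=\pm1$ gives $\Sigma(a)=-\Sigma(a)=0$, a contradiction. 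Hence no surviving type I system carries an orientable candidate surface, so there is no type I ICON surface at all when $r$ is even.

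Combining the three types, no ICON surface of $P(p,q,-r)$ with $r$ even has disconnected boundary. I expect the main obstacle to be the careful treatment of the degenerate family and of the constant-edgepath endings — precisely the configurations that, when $r$ is odd, glue white to white and so produce the orientable disconnected-boundary surfaces of \cite{JFPretzel} and Theorem \ref{thm:main-theorem}. For the constant-edgepath surfaces the colour argument is not automatic (the surviving non-constant parts can be green--green), so one must additionally rule them out, presumably by checking that the resulting candidate surfaces are either non-orientable, separating, or fail to be incompressible. Making the colour/parity dichotomy rigorous at a fractional ending, where a fraction of an edge and the attendant $\rho^{\pm\alpha}$ from the final saddles enter the relations, is the delicate step that must be carried out in full.
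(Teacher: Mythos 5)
Your treatment of types II and III coincides with the paper's, and your enumeration of the type I systems is accurate: the six systems with a $\gamma^+$ on a positive tangle are impossible, leaving the system $(\gamma^-_{1/p},\gamma^-_{1/q},\gamma^-_{-1/r})$, the degenerate family when $\tfrac{1}{p-1}+\tfrac{1}{q-1}=\tfrac{1}{r-1}$, and the constant-edgepath configurations; your colour computations (white, white, and green/red according to the parity of $r$) are also correct. But the heart of your argument is missing. The claim that the surviving systems lead to relations of the form $a=-\rho^{k}J(a)$ ``exactly as in the White--Red, White--Green and Red--Green cases of Theorem \ref{thm:no-typeii}'' is asserted, not proved, and the transfer is not routine: the diagrams of possible orientations in Theorem \ref{thm:no-typeii} are built for edgepaths that reach the axis $x=0$, whereas a type I system ends at a fractional point with $x>0$, so the gluing relations must be rederived with the saddle counts $r_i,s_i$ and the attendant powers of $\rho$ (as in the quasi-monochromatic analysis of Theorem \ref{thm:no-typeiii}). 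Carrying this out at fractional endings is essentially the hard computation of \cite{JFPretzel}, not a corollary of this paper's type II/III theorems --- you concede as much (``the delicate step that must be carried out in full''), and the constant-edgepath configurations are only ``presumably'' disposed of. As it stands the proposal is a strategy outline, not a proof of the lemma.

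What you never exploit, and what the paper's proof rests on entirely, is the boundary slope: an ICON surface is non-separating, so its boundary class in $H_1(\partial E)$ is a nonzero multiple of the longitude, forcing boundary slope zero in \eqref{eqn:slope}. This disposes of \emph{all} type I candidate surfaces at once, with no orientation analysis and no case distinctions. Indeed, by \eqref{eqn:tao_gamma_i} one has $2>\tau(\gamma_p)>-2(p-1)$, $2>\tau(\gamma_q)>-2(q-1)$, and $2(r-1)>\tau(\gamma_{-r})>-2$ for the edgepath of the $-1/r$ tangle (constant edgepaths, with twist $0$, satisfy these bounds too), while the Seifert surface has $\tau(F_0)=2-2(p+q)$; hence every type I candidate surface satisfies
\begin{equation*}
\mathrm{slope}(F)=\tau(F)-\tau(F_0)>-2(p-1)-2(q-1)-2+2(p+q)-2=0 .
\end{equation*}
So no type I candidate surface can be non-separating, whatever its orientability, connectivity, or number of sheets, and the lemma follows from Theorems \ref{thm:no-typeii} and \ref{thm:no-typeiii}. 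If you wish to complete your route instead, the orientation analysis must be carried out at fractional endings for the genuine system, the degenerate family, and the constant-edgepath systems; the slope computation makes all of that unnecessary.
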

\begin{proof}
 In this case, there are candidate surfaces of type I. But we can ignore them because none of them has boundary slope equal to zero. This can be proved as follows. 
 
 Let $\gamma_p$, $\gamma_q$ and $\gamma_r$ be the edgepath system corresponding to a type I candidate surface. As we can easily check, each edgepath has only two directions: always decreasing or always increasing. So the twist number can be bounded by the length of these two directions; this is,
 \begin{eqnarray*}
  2> &\tau(\gamma_p)& >-2(p-1) \\
  2> &\tau(\gamma_q)& >-2(q-1) \\
  2(r-1)> &\tau(\gamma_r)& >-2 
 \end{eqnarray*}

As we know, the twist of the Seifert surface is $\tau(F_0)=2-2(p + q)$, and so we get that 
$$\text{slope}(F) = \tau(F) - \tau(F_0) = \tau(\gamma_p)  + \tau(\gamma_q) + \tau(\gamma_r) + 2(p+q) - 2 > 0$$
Then, the slope of $F$ is never zero.

By Theorems \ref{thm:no-typeii} and \ref{thm:no-typeiii}, this completes the proof.
\end{proof}

In the following final case, we have to work harder to eliminate the possibility of a type I candidate ICON surface. 

\begin{lemma}\label{lemma:ponope}
  There are no ICON surfaces with disconnected boundary in the knot exterior of a pretzel knot of the form $P(p,-q,r)$ with $p, q$ and $r$ integers greater than 1; $p$, $q$ odd and $r$ a even. 
\end{lemma}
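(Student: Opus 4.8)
The plan is to follow the template of Lemmas \ref{lemma:popopo} and \ref{lemma:popone}. Theorems \ref{thm:no-typeii} and \ref{thm:no-typeiii} dispose of the type II and type III candidate surfaces (an orientable one with an odd number of sheets, as any non-separating surface must have, is connected only when $m=1$, hence has connected boundary), so the entire content is the analysis of the type I candidate surfaces for $M(1/p,-1/q,1/r)$. As in Lemma \ref{lemma:popone}, I would first record the homological reduction that an ICON surface, being non-separating, has longitudinal boundary, i.e. boundary slope $0$: if the common boundary slope were nonzero the boundary components would have to cancel in homology, and the meridian would meet the surface an even number of times, making it separating. Hence it suffices to rule out type I candidate surfaces of slope $0$ with disconnected boundary. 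The difference with the case $P(p,q,-r)$, $r$ even, is that there the twist estimate kept the slope strictly positive, whereas here the slope can vanish, which is exactly why one must work harder.

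Next I would classify the type I edgepaths from the explicit formulas \eqref{eqn:gama_i} and \eqref{eqn:gama_-i}. Reading off $\mathscr{D}$, each nonconstant edgepath is of one of two kinds: a \emph{monochromatic green} path running from $\langle 1/p\rangle$, $\langle 1/r\rangle$ or $\langle -1/q\rangle$ toward $\langle \pm 1\rangle$ (these are $\gamma_p^+$, $\gamma_r^+$, $\gamma_{-q}^-$), or a \emph{single white edge} running toward $\langle 0\rangle$ (these are $\gamma_p^-$, $\gamma_r^-$, $\gamma_{-q}^+$). Since the two positive tangles contribute positive height and the negative tangle negative height, condition (E3) immediately kills the all-green choice, whose heights sum to $1-x_0\neq 0$; so the feasible systems are all-white or white/green mixtures. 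I would then, exactly as in the proofs of Theorems \ref{thm:no-typeii} and \ref{thm:no-typeiii}, run the $\rho,J,\Sigma$ gluing relations: a white/green color change across a gluing forces $a=-J(a)$ or $\Sigma(a)=0$, contradicting $\Sigma(a)=\pm1$, so an orientable candidate surface with an odd number of sheets must be monochromatic, and therefore (green being excluded) all-white.

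It then remains to treat the all-white type I systems, and these genuinely occur: condition (E3) becomes $\frac{1}{p-1}+\frac{1}{r-1}=\frac{1}{q-1}$, which has solutions, for instance $(p,q,r)=(7,3,4)$. For such a system I would compute the twist from \eqref{eqn:tao_gamma_i}–\eqref{eqn:tao_gama_-i}, compare it with $\tau(F_0)$ read off the Seifert edgepath system to isolate the slope-$0$ members, and count the boundary components through the orientation diagram of the three glued white edges using Corollary \ref{cor:rho-a=a}. The main obstacle is precisely this final count: a disconnected boundary would force the length and $\Sigma$-parameters of the relevant weight word to meet an odd-length condition, and I expect the evenness of $r$ (so that $r-1$ is odd while $p-1$ and $q-1$ are even) to obstruct it, forcing either $\Sigma=0$ (non-orientable) or a single boundary component via Corollary \ref{cor:rho-a=a}. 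Conceptually this is the same parity that, in the all-odd case $P(p,q,-r)$, lets the ICON-allowing triples of Theorem \ref{thm:main-theorem} produce disconnected-boundary surfaces; the hard part here is to turn the Diophantine relation above, together with the parities of $p-1,q-1,r-1$, into the statement that this parity fails, so that no orientable, non-separating, slope-$0$ type I surface has more than one boundary component. Everything else is bookkeeping parallel to Theorems \ref{thm:no-typeii} and \ref{thm:no-typeiii}.
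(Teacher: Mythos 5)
Your opening moves coincide with the paper's: Theorems \ref{thm:no-typeii} and \ref{thm:no-typeiii} reduce everything to type I candidate surfaces, and the homological observation that a non-separating orientable surface must have slope $0$ is exactly how the paper (as in Lemma \ref{lemma:popone}) licenses discarding nonzero-slope candidates. But there is a major omission: you only analyze edgepath systems in which all three edgepaths are nonconstant. Condition (E1) also allows \emph{constant} edgepaths (points on the horizontal segment at height $p_i/q_i$), and for $P(p,-q,r)$ these give genuine type I systems satisfying (E1)--(E4). The paper devotes half of its proof (its Case 2) to them: it first shows that a constant edgepath forces $q=\min\{p,q,r\}$ and $\gamma_{-q}\equiv\langle -1/q\rangle$, and then, in the sub-cases where candidate surfaces actually \emph{exist} (the $++$ sub-case with $x=\tfrac{2q-1}{2q}$, and the case of two constant edgepaths), non-existence is not available at all; those surfaces are dispatched only by showing that the non-constant edgepaths end at vertices of $\mathscr{D}$, so that Corollary \ref{cor:vertexes-implies-coonected-bounday} forces connected boundary. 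Your proposal has no mechanism to handle these surfaces, and no orientability or slope argument can make them disappear, since they exist.

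Within the nonconstant case there are two further problems. First, a color error: since $r$ is even, $\gamma_r^-$ is the single edge $\langle 1/r, 0/1\rangle$, which has color $\langle 1/0, 0/1\rangle$ (red), not white; so your ``all-white'' system is really white--white--red, itself a color mixture, and under your own proposed rule (``mixed colors force non-orientability'') nothing would survive -- the case division is internally inconsistent. Second, and more seriously, the gluing-relation machinery you invoke to kill mixtures is established, both here and in \cite{JFPretzel}, only for the curve systems arising at $x=0$ or at $\langle\infty\rangle$, i.e.\ for types II and III; for type I the edgepaths end at fractional points on a vertical line $x_0>0$, no diagrams of possible orientations are set up there, and the paper never argues this way. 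Instead it kills the one mixture surviving (E3) (its Case 1.2, $\gamma_{-q}=x-1$) by computing that zero slope would force $p=1$, and kills the white--white--red system (its Case 1.1, possible only when $\frac{1}{p-1}+\frac{1}{r-1}=\frac{1}{q-1}$) by the twist bounds $-2<\tau(F)<4$ combined with the parity of the Seifert twist $\tau(F_0)=2(q-p)+2$: since $p,q$ are both odd this forces $p=q$, whence $\frac{1}{r-1}=0$, a contradiction. Note that the operative parity is the oddness of $p$ and $q$, not the evenness of $r$ that you ``expect'' to be the obstruction; in your sketch this decisive computation is left as a hope rather than carried out.
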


\begin{proof}

By Theorems \ref{thm:no-typeii} and \ref{thm:no-typeiii}. we only need to prove that there are no type I candidate surface. To do that, we start with the cases when no edgepath is constant. 

\textbf{Case 1: No edgepath is constant.}
In this case, each edgepath has two possibilities: to be a decreasing function or an increasing function. On the following table we write the equation for each possibility:

\begin{center}
  \begin{tabular}{c|l|l}
    edgepath & decreasing function & increasing function \\\hline
    $\gamma_p(x)$ & $1-x$ & $\frac{x}{p-1}$\\
    $\gamma_{-q}(x)$ & $\frac{-x}{q-1}$ & $x-1$ \\
    $\gamma_r(x)$ & $1-x$ & $\frac{x}{r-1}$ \\    
  \end{tabular}
\end{center}

Let start by considering the case in which one of the edgepaths $\gamma_p$ or $\gamma_r$ is decreasing (value equal to $1-x$). Without lost of generality assume that $\gamma_p(x) = 1-x$, then observe that $\gamma_{-q}(x) \geq x-1$, this implies that the sum $\gamma_p(x) + \gamma_{-q}(x) + \gamma_r(x) \geq \gamma_r(x) > 0$. So there will be no candidate surface by condition E1.

Then, both $\gamma_p$ and $\gamma_r$ have to be increasing functions, meaning that $\gamma_p(x) = x/(p-1)$ and $\gamma_r(x) = x/(r-1)$.  Now, we analyze the two possibilities for $\gamma_{-q}$ .

\textbf{Case 1.1 $\gamma_{-q}(x) = -x/(q-1)$}

In this case, the only way that equation $\gamma_p(x) + \gamma_{-q}(x) + \gamma_r(x)=0$ has a solution is when $p,q$ and $r$ satisfy:
\begin{equation}\label{eqn:condition-case+-+}
\frac{1}{p-1} +\frac{1}{r-1} = \frac{1}{q-1}  
\end{equation}

Observe that the three edgepaths have length at most 1; moreover we have the following inequalities:
\begin{eqnarray*}
 0 < \tau(\gamma_p) < 2 \\
 -2 < \tau(\gamma_{-q}) < 0 \\
 0 < \tau(\gamma_r) < 2
\end{eqnarray*}

This implies that $ -2 <  \tau(\gamma_p) + \tau(\gamma_{-q}) +\tau(\gamma_r) < 4$. But we want this twist to be equal to the twist of the Seifert surface (in order to obtain a  zero boundary slope surface); then $ -2< 2(q-p)+2 < 4$ which is equivalent to $-2 < q -p < 1$. But as $q$ and $p$ are odd their difference is even, so $p = q$. But this, together with condition (\ref{eqn:condition-case+-+}), implies that $1/(r-1) = 0$, which is impossible.

\textbf{Case 1.2: $\gamma_{-q}(x) =x -1$}

In this case, the equation $\gamma_p(x) + \gamma_{-q}(x) + \gamma_r(x)=0$ gets the form, 
$$\frac{x}{p-1}+x-1+\frac{x}{r-1} = 0$$ whose solution is:
$$x=\frac{(p-1)(r-1)}{(p-1)(r-1) + (p-1)+(r-1)}$$

Then we compute the twist $\tau$ using formula \eqref{eqn:tao_gamma_i}:
\begin{multline}
\tau(F) = \tau(\gamma_p) + \tau(\gamma_{-q}) + \tau(\gamma_r) = \tau(\gamma_p^-) - \tau(\gamma_q^+) + \tau(\gamma_r^-) \\
\phantom{\tau(F) } = \frac{2(p-1)}{p+r-2} + 2q - 2\frac{(p-1)(r-1)+(p-1)+(r-1)}{(p-1)+(r-1)} + \frac{2(r-1)}{p+r-2} \\
\phantom{\tau(F) } = 2+2q -2\frac{(p-1)(r-1)+(p-1)+(r-1)}{(p-1)+(r-1)}
\end{multline}

Now, the twist of a Seifert surface is $\tau(F_0) = 2(q-p) + 2$ then, $F$ will have zero slope only if $$2(q-p) + 2 = 2+2q -2\frac{(p-1)(r-1)+(p-1)+(r-1)}{(p-1)+(r-1)}$$

After some simple manipulations we obtain that $p=1$, but this is impossible since we require $p>1$. This completes Case 1.2 and by consequence Case 1.

\textbf{Case 2: At least one constant edgepath.}
If one edgepath is constant, then we are looking for a solution of \eqref{eqn:condit-E3-eqn} but with $x \geq \min\{1-1/p, 1-1/q, 1-1/r\} = 1 - 1/\min\{p,q,r\}$. 

First, lets prove that $\min\{p,q,r\}=q$.  If $\min\{p,q,r\}<q$, then we will have that $p=\min\{p,q,r\}$ or $r=\min\{p,q,r\}$; without loss of generality, assume that $p=\min\{p,q,r\}<q$. 

Under these conditions, if there is a solution of \eqref{eqn:condit-E3-eqn} with one constant edgepath, we necessary will get that $\gamma_p \equiv \langle 1/p \rangle$ (is constant). Then we are looking for a solution of \eqref{eqn:condit-E3-eqn} where $1 > x \geq (p-1)/p$. 

Notice that $\gamma_r(x) > 0$ and $\gamma_{-q}(x) > \min\{x-1,-1/q\}$,  then 
$$\gamma_p(x) + \gamma_{-q}(x)+ \gamma_r(x) > \frac{1}{p} + \min\{x-1,1/q\} = \min\{x-\frac{p-1}{p}, 1/p-1/q\} \geq 0$$

This implies that there will be no solution in this case. The same occurs when $r=\min\{p,q,r\}$. So, we must have that $q = \min\{p,q,r\}$. As we are looking for solutions with 
$x \geq \min\{1-1/p, 1-1/q, 1-1/r\} = 1-1/q$ then $\gamma_q \equiv \langle -1/q \rangle$ (is constant).

Now, notice that it is impossible for the three edgepaths to be constant, because of the parity conditions on $p$, $q$ and $r$, they can not satisfy condition E3. So, the only possibilities are when just one or two of the edgepaths are constant.

\textbf{Case 2.1: Only one edgepath is constant}

By the observations made previously, the constant edgepath is $\gamma_{-q}$. The other two are non-constant, so we now analyze the possibilities, and solve the corresponding equations and compute the slope. 

We begin with the case where $\gamma_p = \gamma_p^-$ and $\gamma_r = \gamma_r^-$. Observe that the twist for each of this edgepath is bounded between 0 and 2 (never reaches 0, neither 2). So, the twist $\tau(F)$ of any candidate surface $F$ associated to this edgepath satisfies $0<\tau(F)< 4$. But, the twist of Seifert is $2q -2p + 2$ which is an even integer, and the only even integer between $0$ and $4$ (not including 0, neither 4) is $2$, so $2q-2p + 2 = 2$ then $p=q$. As $\gamma_{-q}$ is constant and $p=q$ then $\gamma_{p}$ has to be constant, implying that $2 = \tau(F) = \tau(\gamma_r)$  it is impossible for $\tau(\gamma_r)$ to be 2 since this implies $\gamma_{-r}$ reaches $x=0$.

For the other three cases we will compute the possible values that satisfy \eqref{eqn:condit-E3-eqn}. The following table summarizes those computations. 

\begin{center}
  \begin{tabular}{c|p{4cm}|l}
    $\gamma_p$, $\gamma_r$ & Equation from condition E3 & Solution  \\\hline
    $++$ & $(1-x) + (1-x) = \frac{1}{q} $ \vspace{1.5ex}& $x = \frac{2q-1}{2q}$   \\
    $+-$ & $1-x+ \frac{x}{r-1} = \frac{1}{q} $ \vspace{1.5ex}& $x = \frac{q-1}{q}\frac{r-1}{r-2}$   \\
    $-+$ & $\frac{x}{p-1} + 1-x = \frac{1}{q} $\vspace{1.5ex}& $x = \frac{q-1}{q}\frac{p-1}{p-2}$  \\
  \end{tabular}
\end{center}

We can discard the $++$ case because the solution corresponds to vertexes at end of $\gamma_p$ and $\gamma_r$; so by Corollary \ref{cor:vertexes-implies-coonected-bounday} the associated candidate surface will have the number of sheets equal to one, so the boundary will be connected. 

For the other two cases, we now compute the $x$ coordinate when the twist is equal to Seifert twist. Because we want zero slope candidate surfaces, this $x$ value has to be equal to the computed above.

\begin{center}
  \begin{tabular}{c|p{5cm}|l}
    $\gamma_p$, $\gamma_r$ & Equation Twist = Seifert Twist & Solution  \\\hline
    $+-$ & $\frac{2}{1-x} - 2p + 2 - \frac{2x}{(1-x)(r-1)}$ $= 2q~-~2p~+~2$ \vspace{1.5ex}& $x = \frac{(q-1)(r-1)}{q(r-1)-1}$ \\
    $-+$ & $2 - \frac{x}{1-x}\frac{2}{p-1} + \frac{2}{1-x}- 2r$ $= 2q~-~2p~+~2$ & $x = \frac{(p-1)(q+r-p-1)}{(p-1)(q+r-p) -1}$ \\
  \end{tabular}
\end{center}

In the case $+-$, the equation \eqref{eqn:case2.1_+-case} is the condition to get a candidate surface of zero slope. After applying some basic algebraic manipulations the condition turns out to be equivalent to $q=1$, which contradicts the hypothesis about $q$.

\begin{equation}\label{eqn:case2.1_+-case}
\frac{q-1}{q}\frac{r-1}{r-2} =  \frac{(q-1)(r-1)}{q(r-1)-1}
\end{equation}

In the case $-+$, we write again an equation, but the equation can not be reduced as above, so we only apply some basic algebraic manipulations and write it down in the form \eqref{eq:case2.1_-+case}
\begin{equation}\label{eq:case2.1_-+case}
\frac{(p-2)(q+r-p-1)}{(p-1)(q+r-p) -1} = \frac{q-1}{q}
\end{equation}

Reducing modulo 2 (taking to account that $p,q \equiv 1$ and $r \equiv 0$ modulo 2) we obtain a contradiction.

\textbf{Case 2.2: Exactly two edgepaths are constant}

When two edgepaths are constant the twist of the candidate surface is equal to the twist of the non-constant edgepath. As we want the slope of the surface to be zero, then the twist has to be an even integer, meaning that the non-constant edgepath ends at a vertex. Then, by Corollary \ref{cor:vertexes-implies-coonected-bounday}, the candidate surface will have connected boundary.

\end{proof}

\appendix 
\section*{Final comments}
The results obtained here are a step toward proving that all alternating Montesinos knots have property $\mathbb{Z}$, the remaining step is to solve the case for extended candidate surfaces. The prove for three tangles pretzels give us an idea of how difficult it gets when we try to prove property $\mathbb{Z}$ for non-alternating knots.  This tell us that for the general case of Montesinos knots (not necessarily alternating) we have to improve our techniques.

\section*{Acknowledgments}
I am grateful to Victor N\'u\~nez and Enrique Ram\'{\i}rez for stimulating conversations. This Work was sponsored by CIMAT and CONACyT.

\bibliography{extras/bibliografia}

\end{document}